\long\def\delete#1{}
\newtheorem{theorem}{Theorem}%%[section]
\newtheorem{lemma}[theorem]{Lemma}
\def\mod{{\rm mod}}
\def\gcd{{\rm gcd}}
\title{{\bf Identifying codes and locating-dominating sets on paths and cycles}
\thanks{Supported in part by National Natural
Science Foundation of China (Nos. 60673048 and 10871166) and
Shanghai Leading Academic Discipline Project (No. B407).}}
\author{{\bf  Chunxia Chen }
\  \  {\bf Changhong Lu}\footnote{Correspond author. E-mail:
chlu@math.ecnu.edu.cn}\ \\
       %{\footnotesize
       Department of Mathematics\\ East China Normal University, Shanghai, 200241,
       China%}\\
     \\  \\ {\bf Zhengke Miao}\\
       Department of Mathematics\\
       Xuzhou Normal University, Xuzhou, 221116, China
       }
\begin{document}

\openup 1.0\jot \maketitle

\begin{quote} {\bf Abstract}
Let $G=(V,E)$ be a graph and let $r\ge 1$ be an integer. For  a set
$D \subseteq V$, define $N_r[x] = \{y \in V: d(x, y) \leq r\}$ and
$D_r(x) = N_r[x] \cap D$, where  $d(x,y)$ denotes the number of
edges in any shortest path between $x$ and $y$. $D$ is known as an
$r$-identifying code ~($r$-locating-dominating set, respectively),
if for all vertices $x\in V$~($x \in V\backslash D$, respectively),
$D_r(x)$ are all nonempty and different. % ~Gravier et
%al.~[Identifying codes of cycles, European Journal of Combinatorics
%27 (2006) 767-776] provided partial results about the minimum size
%of $D$ for $r$-identifying codes for paths and cycles, and Roberts
%et al.~[Locating sensors in paths and cycles: The case of
%2-identifying codes, European Journal of Combinatorics 29(2008)
%72-82] provided complete results for the cycles and paths when $r =
%2$.
In this paper, we provide complete results for $r$-identifying codes
in paths and odd cycles; we also give complete results for
$2$-locating-dominating sets in cycles.% which completes the results
%of Bertrand et al.~[Identifying and locating-dominating codes on
%chains and cycles, European Journal of Combinatorics 25 (2004)
%969-987].
\\
\\
\textbf{Keywords:} $r$-identifying codes;  $r$-locating-dominating sets; cycles;  paths. \\
%\textbf{2000 Mathematics Subject Classification: 05C69; 05C85;
%68R10}
\end{quote}
\newpage
\section{Introduction}
We investigate the well-known identifying codes problem which
originated, for instance, from fault diagnosis in multiprocessor
systems.  The purpose of fault diagnosis is to test the
multiprocessor system and locate faulty processors. A multiprocessor
system can be modeled as an undirected graph $G = (V,E)$, where $V$
is the set of processors, $E$ is the set of links in the system.
Specific detectors are executed on certain selected processors to
carry out diagnosis. The selection of these processors is done by
generating the code $D$ that allows for unique identification of
faulty processors. Every processor corresponding to a codeword
vertex tests itself and the processors that are in some areas.
Hence, an optimal code(minimum number of codewords) minimizes the
amount of overhead required to implement fault diagnosis.

More precisely, let $G=(V,E)$ be an undirected graph and let $r\ge
1$ be an integer. Assume that $D$ is a subset of $V$ at which we
place detectors. We define $N_r[x] = \{y \in V: d(x, y) \leq r\}$
and $D_r(x) = N_r[x] \cap D$, where  $d(x,y)$ denotes the number of
edges in any shortest path between $x$ and $y$.  In this sense,
$D_r[x]$ is the set of all detectors that can detect an attack at
$x$.   We say that $D$ is an {\sl $r$-identifying code ($r$-IC) in
$G$ } if $D_r(x) \neq \emptyset$ for every vertex $x\in V$ and
$D_r(x) \neq D_r(y)$ whenever $x\neq y$.   In an $r$-IC, the set of
detectors activated by an attack provides a unique signature that
allows us to determine where the attack took place. We denote the
minimum cardinality of an $r$-identifying code $D$ of $G$ by $M_r^I(G)$. %If
%$r = 1$, use $M^I(G)$ to mean $M_1^I(G)$.
Note that not all graphs admit an $r$-identifying code. A necessary
and sufficient condition to admit an $r$-identifying code is that
for  any pair of distinct vertices $x$ and $y$ we have $N_r[x]\neq
N_r[y]$~(\cite{BCO}).

We define a closely related concept as follows. If for all vertices
$x \in V\backslash D$, $D_r(x)$ are all not empty and different,
then we say $D$ is an {\sl $r$-locating-dominating set} or {\sl
$r$-LD set} for short. The smallest $d$ such that there is an $r$-LD
set of size $d$ is denoted by $M_r^{LD}(G)$. The concept was
introduces (for $r = 1$) by Slater \cite{PJS}, motivated by nuclear
power plant safety. It can be used for fault detection in
distributed system. We also note that $r$-LD sets always exist,
since the entire vertex set of a graph is an $r$-LD set.

Identifying codes were introduced in \cite{KCL}, locating-dominating
sets in \cite{CS}. The literature about $r$-identifying codes and
$r$-locating-dominating sets has become quite extensive. There are
now numerous papers dealing with identifying codes and
local-dominating sets (see for instance  \cite{LOB} for an
up-to-date bibliography). The problems  of finding optimal $r$-ICs
or $r$-LDs in a graph are {\em  NP}-complete (see \cite{CH,
COO,COOO,CS}). On the other hand, many special graphs have been
investigated (see for instance,
\cite{BH,BHL,C,CHH,CHL,CHLZ,LR,HL,S}). In this paper we are
interested in studying $r$-IC  for cycles and paths and and $2$-LD
sets for cycles. This subject was already investigated in
\cite{BCO,GMS,RR,PJS,XKH}. Let $P_n$ ($C_n$, respectively) be a path
(cycle, respectively) of $n$ vertices. For $r=1$, the exact values
of $M_1^I(P_n)$ and $M_1^I(C_n)$ for even cycles  was given by
\cite{BCO}; Gravier et al. \cite{GMS} gave the exact values of
$M_1^I(C_n)$ for odd cycles. Its analogue for $1$-LD sets was given
by Slater \cite{PJS}. For $r=2$,  complete solution for $M_2^I(C_n)$
and $M_2^I(P_n)$ was provided in \cite{RR}. Bertand et al.
\cite{BCO} provided complete results about $M_2^{LD}(P_n)$ and gave
the exact value of $M_2^{LD}(C_n)$ for $n=6k$. For all $r\ge 1$,
complete results about $r$-ICs  for even cycles are provided in
\cite{BCO}; Partial results about $r$-ICs for paths and odd cycles
can be found  in \cite{BCO,GMS,XKH}. % and the open case is $2r+5\le
%n\le 3r+1$.

The structure of this paper is the following. Motivated by the
method in \cite{RR}, in Section $2$ we give all values of
$M_r^I(C_n)$ for odd cycle $C_n$.
%for the subcases which remains unknown in \cite{BCO,GMS}.
In Section $3$ we provide complete results for $r$-identifying codes
in paths. In Section $4$  we find the values of $M_2^{LD}(C_n)$.

\section{$r$-identifying codes for  odd cycle $C_n$ }
In the following, we assume that the vertices of  $C_n$ have labeled
consecutively as $x_1,x_2,\cdots,x_n$. When we are dealing with a
cycle, we also use addition and subtraction modulo $n$, so that, for
example, $x_{5n+2}$ means $x_2$.   The case $n$ even is solved in
\cite{BCO}, and it is obvious that $M_r^I(C_n)$ is undefined for
$n\le 2r+1$. Hence,  we assume that  $n\ge 2r+3$. %When $n\le
%2r+1$, it We discuss the case $n$ odd.For the case $n$ odd, what
%remains unknown in \cite{GMS}
%is  two subcases: One  is   $2r+5\le n\le 3r+1$, the other is $n\ge
%4r+3$ and $\gcd (2r+1,n)=1$.

%\subsection{$r$-ICs for cycles}
\begin{lemma}\label{lem1}
Suppose graph $G$ has maximum degree $2$, $y_1,y_2,\cdots,y_{2r+2}$
is a path in $G$, and $D$ is an $r$-IC for $G$. Then it is
impossible to have $y_1\not\in D$ and $y_{2r+2}\not\in D$.
\end{lemma}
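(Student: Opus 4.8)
The plan is to force two distinct vertices to receive the same code, contradicting that $D$ is an $r$-IC. Assume, for contradiction, that $y_1\notin D$ and $y_{2r+2}\notin D$, and focus attention on the two central vertices $y_{r+1}$ and $y_{r+2}$ of the given path.

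First I would record the elementary bound $|N_r[v]|\le 2r+1$, valid for every vertex $v$ of a graph of maximum degree $2$ and every $r\ge 0$; this follows by an immediate induction on $r$, since passing from $N_{k-1}[v]$ to $N_k[v]$ adds at most two new vertices. Next I would pin down the two central balls exactly. Going along the path gives $d(y_{r+1},y_j)\le |r+1-j|\le r$ for every $j\in\{1,\dots,2r+1\}$, so $\{y_1,\dots,y_{2r+1}\}\subseteq N_r[y_{r+1}]$; these are $2r+1$ distinct vertices and $|N_r[y_{r+1}]|\le 2r+1$, hence $N_r[y_{r+1}]=\{y_1,\dots,y_{2r+1}\}$. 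The same reasoning yields $N_r[y_{r+2}]=\{y_2,\dots,y_{2r+2}\}$, and therefore $N_r[y_{r+1}]\setminus N_r[y_{r+2}]=\{y_1\}$ while $N_r[y_{r+2}]\setminus N_r[y_{r+1}]=\{y_{2r+2}\}$.

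Finally I would intersect with $D$. Since $y_1\notin D$, $D_r(y_{r+1})\setminus D_r(y_{r+2})=\{y_1\}\cap D=\emptyset$, and since $y_{2r+2}\notin D$, $D_r(y_{r+2})\setminus D_r(y_{r+1})=\{y_{2r+2}\}\cap D=\emptyset$; hence $D_r(y_{r+1})=D_r(y_{r+2})$ although $y_{r+1}\neq y_{r+2}$, contradicting that $D$ is an $r$-IC. The only delicate point is the identity $N_r[y_{r+1}]=\{y_1,\dots,y_{2r+1}\}$: a priori the ball might contain a vertex off the path — one reached "the short way round" when the component is a small cycle, or one lying just beyond $y_1$ — and it is precisely the cardinality bound $|N_r[v]|\le 2r+1$ that excludes this, so that no separate treatment of path components and cycle components is needed.
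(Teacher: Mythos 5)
Your proposal is correct and follows exactly the same route as the paper, whose entire proof is the one-line assertion that $y_1\notin D$ and $y_{2r+2}\notin D$ force $D_r(y_{r+1})=D_r(y_{r+2})$. You have simply supplied the justification the paper omits, namely that the degree bound gives $|N_r[v]|\le 2r+1$ and hence $N_r[y_{r+1}]$ and $N_r[y_{r+2}]$ are exactly $\{y_1,\dots,y_{2r+1}\}$ and $\{y_2,\dots,y_{2r+2}\}$, differing only in the two excluded endpoints.
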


\begin{proof}
If $y_1\not\in D$ and $y_{2r+2}\not\in D$, then
$D_r(y_{r+1})=D_r(y_{r+2})$. \hspace*{\fill}$\Box$

\end{proof}

\begin{lemma}\label{lem2}
If $n\ge 4r+2$, $D$ is an $r$-IC for  $C_n$ if and only if \\
(1)~$x_i\in D$ or $x_{i+2r+1}\in D$ for all $i\in \{1,\cdots,n\}$;\\ %there are no $2r+2$ consecutive
%vertices with the first and last
%not in $D$;\\
(2)~there are no $2r+1$ consecutive vertices none of which is in
$D$.
\end{lemma}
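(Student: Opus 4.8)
The plan is to establish both implications by elementary arc-counting on $C_n$, using Lemma~\ref{lem1} for the necessity of (1). Throughout I write $N_r[x_j]=\{x_{j-r},\dots,x_{j+r}\}$, an arc of $2r+1$ vertices since $n\ge 2r+1$, and I will use repeatedly the elementary fact that $D_r(x)=D_r(y)$ forces $D\cap\bigl(N_r[x]\,\triangle\,N_r[y]\bigr)=\emptyset$ (any codeword in the symmetric difference would lie in exactly one of $D_r(x),D_r(y)$).

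\emph{Necessity of (1) and (2).} Suppose (1) fails and pick $i$ with $x_i\notin D$ and $x_{i+2r+1}\notin D$. Then $x_i,x_{i+1},\dots,x_{i+2r+1}$ is a genuine path on $2r+2$ vertices in $C_n$ (genuine because $n\ge 4r+2>2r+2$), which has maximum degree $2$, so Lemma~\ref{lem1} is contradicted. Suppose (2) fails, say $x_i,\dots,x_{i+2r}$ are $2r+1$ consecutive non-codewords; then $N_r[x_{i+r}]=\{x_i,\dots,x_{i+2r}\}$ gives $D_r(x_{i+r})=\emptyset$, contradicting that $D$ is an $r$-IC.

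\emph{Sufficiency.} Assume (1) and (2). Nonemptiness is immediate: if $D_r(x_j)=\emptyset$ then $x_{j-r},\dots,x_{j+r}$ are $2r+1$ consecutive non-codewords, violating (2). For separation, suppose $D_r(x_j)=D_r(x_k)$ with $x_j\ne x_k$; let $a$ be the length of the shorter of the two arcs joining them, so $1\le a\le\lfloor n/2\rfloor$, and relabel so that $x_k$ is reached from $x_j$ by moving $a$ steps forward. If $a\ge 2r+1$, then both arcs between $x_j$ and $x_k$ have length at least $2r+1$, so $N_r[x_j]\cap N_r[x_k]=\emptyset$; but then the common set $D_r(x_j)=D_r(x_k)$ is a subset of $N_r[x_j]\cap N_r[x_k]=\emptyset$, contradicting nonemptiness. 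If $1\le a\le 2r$, a short computation identifies $N_r[x_j]\,\triangle\,N_r[x_k]$ with the union of the two length-$a$ arcs $A_1=\{x_{j-r},\dots,x_{j+a-r-1}\}$ and $A_2=\{x_{j+r+1},\dots,x_{j+a+r}\}$; here is where $n\ge 4r+2$ is used, as it ensures $|N_r[x_j]\cup N_r[x_k]|=2r+1+a\le n$, so these two balls span a proper arc of $C_n$ and no wrap-around coincidence occurs. Now apply (1) with index $i=j-r$: either $x_{j-r}\in D$ or $x_{(j-r)+(2r+1)}=x_{j+r+1}\in D$; since $x_{j-r}\in A_1$ and $x_{j+r+1}\in A_2$, in both cases $D$ meets $N_r[x_j]\,\triangle\,N_r[x_k]$, the desired contradiction. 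Hence $D$ is an $r$-IC.

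The only step that I expect to require genuine care is the case $1\le a\le 2r$: pinning down $N_r[x_j]\,\triangle\,N_r[x_k]=A_1\cup A_2$ exactly, and verifying that the hypothesis $n\ge 4r+2$ (rather than merely $n\ge 2r+3$) is what keeps $A_1$ and $A_2$ disjoint and prevents either of them from wrapping back onto a ball. The remaining parts are immediate from Lemma~\ref{lem1}, from condition (2), and from the symmetric-difference observation of the first paragraph.
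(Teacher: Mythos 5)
Your proof is correct and follows essentially the same route as the paper: necessity of (1) from Lemma~\ref{lem1} and of (2) from nonemptiness, and sufficiency by splitting on the distance between the two vertices, using condition (1) to place a codeword in the symmetric difference of the two balls when they overlap, and disjointness of the balls plus nonemptiness when they do not. Your explicit identification of the symmetric difference as the two arcs $A_1,A_2$ and the remark on where $n\ge 4r+2$ enters are slightly more detailed than the paper's version but not a different argument.
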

\begin{proof}
($\Rightarrow$:)~As $D_r(x) \neq \emptyset$ for all $x\in V$,
condition (2) is necessary. Necessity of condition (1) follows from
Lemma \ref{lem1}.

($\Leftarrow$:)~%We shall now prove the sufficiency of condition (2)
Condition $2$ implies that $D_r(x)\neq \emptyset$ for each $x\in V$.
Now we show that  $D_r(x_i) \neq D_r(x_j)$ whenever $i \neq j$.
Without loss of generality, we can assume that  (A) $i < j$ and  the
distance from $x_i$ to $x_j$ is no larger in a clockwise direction
around the cycle than in a counterclockwise direction. $x_{i-r}\neq
x_{i+r+1}$ since $n\ge 4r+2$ (In fact, $n\ge 2r+2$ is sufficient).
By condition (1), either $x_{i-r}\in D$ or $x_{i+r+1}\in D$. If $i+1
\leq j \leq i+2r+1$, this implies that $D_r(x_i) \neq D_r(x_j)$. If
$j
> i+2r+1$, apply condition (2) and (A), we know that
$\{x_{i-r},x_{i-r+1},\cdots,x_i,\cdots,x_{i+r}\}\cap D\neq
\emptyset$, and hence $D_r(x_i) \neq D_r(x_j)$.
\hspace*{\fill}$\Box$
\end{proof}

%Parts of the next theorem were know previously. Bertrand et
%al.~\cite{BCO} and Gravier et al.~\cite{GMS} provide lower bounds
%and Gravier et al. give exact values for some special cases.% For $r
%= 2$, Roberts et al.~\cite{RR} give exact values of $M_2^I(C_n)$.

\begin{theorem}\label{thm3}
For the cycle $C_{2k+1}$, let $k=(2r+1)p+q$ with $p\ge 1$ and $q\in
\{ 0,1, \cdots,2r\}$.\\
(a) ~$M_r^I(C_{2k+1})=\gcd(2r+1,2k+1)\lceil \frac{2k+1}{2\gcd(2r+1,2k+1)}\rceil$ if $\gcd(2r+1,2k+1)\neq 1$;\\
(b)~$M_r^I(C_{2k+1}) = k+2$ if $\gcd(2r+1,2k+1)= 1$ and $q = 0 $, $2r$;\\
(c)~$M_r^I(C_{2k+1})=k+1$ if otherwise.
\end{theorem}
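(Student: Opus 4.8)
The strategy is to reduce everything to the combinatorial characterization in Lemma \ref{lem2}, treating the problem as a covering/packing problem on the cyclic index set $\mathbb{Z}_{2k+1}$. Since $n = 2k+1$ and we need $n \ge 4r+2$, i.e. $2k+1 \ge 4r+2$, which holds precisely when $k \ge 2r+1$, i.e. $p \ge 1$ — so the hypothesis $p\ge 1$ is exactly what makes Lemma \ref{lem2} applicable. (One should note in passing that $2r+1$ and $2k+1$ are both odd, so $\gcd(2r+1,2k+1) = \gcd(2r+1, 2k+1)$ is odd, and that $\gcd(2r+1,2k+1) = \gcd(2r+1, 2(k-r))$ etc. — these elementary facts will be used to relate $q$ to the gcd.)

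The heart of the argument is condition (1) of Lemma \ref{lem2}: $x_i \in D$ or $x_{i+2r+1}\in D$ for every $i$. Consider the functional graph on $\mathbb{Z}_{2k+1}$ where $i$ is joined to $i + (2r+1)$. This graph is a disjoint union of $d := \gcd(2r+1, 2k+1)$ cycles, each of length $(2k+1)/d$, and condition (1) says $D$ must be a vertex cover of every edge of this union of cycles, i.e. a dominating-by-edges/transversal set. On a cycle of length $\ell$ the minimum vertex cover has size $\lceil \ell/2\rceil$, which gives the lower bound $M_r^I(C_{2k+1}) \ge d\lceil \tfrac{2k+1}{2d}\rceil$. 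The plan is: (i) when $d \ne 1$, show this bound is also achieved — the vertex cover constructed on each of the $d$ cycles must be checked to also satisfy condition (2) (no $2r+1$ consecutive missing vertices), which should follow because on each functional-graph cycle, consecutive chosen/unchosen vertices along that cycle translate to a controlled pattern along $C_{2k+1}$; this gives part (a). (ii) When $d=1$, the functional graph is a single $(2k+1)$-cycle of odd length, so its minimum vertex cover is $\lceil (2k+1)/2 \rceil = k+1$; but a minimum vertex cover of an odd cycle necessarily has \emph{two} consecutive chosen vertices somewhere, and one must analyze whether the resulting $D$ of size $k+1$ can be arranged to avoid $2r+1$ consecutive missing vertices (condition (2)). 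This is where the case split on $q$ enters: I expect that for $q \notin \{0, 2r\}$ there is enough slack to place the size-$(k+1)$ cover so that condition (2) holds, giving $M_r^I = k+1$ (part (c)), while for $q \in \{0, 2r\}$ every size-$(k+1)$ vertex cover of the functional cycle forces a block of $2r+1$ consecutive vertices outside $D$, so one extra codeword is needed, giving $k+2$ (part (b)).

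Concretely, for the constructions I would write $D$ explicitly as a union of residue classes or near-periodic patterns: for the lower bounds in (b), the argument is that if $|D| = k+1$ then walking around the functional $(2r+1)$-step cycle, the $k$ "missing" indices, placed on a cycle of odd length $2k+1$ under the transversal condition, are forced into a configuration whose preimage in the natural cyclic order contains $2r+1$ in a row exactly when $q=0$ or $q=2r$ (this is a counting argument on how the $2r+1$-step permutation reshuffles consecutive blocks, controlled by $q = k \bmod (2r+1)$). For the matching upper bound $k+2$, exhibit one explicit code. For part (c), exhibit an explicit size-$(k+1)$ code satisfying both conditions of Lemma \ref{lem2} for each residue $q \in \{1,\dots,2r-1\}$, likely of the form: take a periodic pattern of period $2(2r+1)$ consisting of $2r+1$ codewords followed by $2r+1$ non-codewords, then adjust the "seam" using the $q$ leftover vertices.

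The main obstacle will be the lower bound in part (b): proving that \emph{no} size-$(k+1)$ code works when $q \in \{0,2r\}$. The vertex-cover lower bound only gives $k+1$, so ruling out $k+1$ requires combining the transversal structure with condition (2) simultaneously, and this needs a careful combinatorial analysis of how minimum vertex covers of the odd functional cycle distribute the doubled-up codeword relative to the linear order on $C_{2k+1}$ — essentially showing that the unique "defect" of an odd-cycle vertex cover always lands, after the $(2r+1)$-step relabeling, in a position that creates $2r+1$ consecutive non-codewords precisely in these two boundary cases. I would handle this by parametrizing all minimum vertex covers of the functional cycle, pushing each through the relabeling, and locating the longest run of non-codewords as a function of $q$.
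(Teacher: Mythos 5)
Your plan follows essentially the same route as the paper: the ``functional graph'' cycles you describe are exactly the paper's constraint \emph{streams}, the vertex-cover count gives the same lower bound $\gcd(2r+1,2k+1)\lceil\frac{2k+1}{2\gcd(2r+1,2k+1)}\rceil$, and the $\gcd=1$ case is settled, as in the paper, by observing that the doubled edge of a minimum cover of the odd functional cycle forces the rest of $D$ (up to rotation) and then testing condition (2) of Lemma \ref{lem2} against the residue $q$. The only caveat is that the two steps you flag as remaining --- verifying condition (2) for the size-$(k+1)$ construction when $1\le q\le 2r-1$, and ruling out size $k+1$ when $q\in\{0,2r\}$ --- are exactly where the paper spends most of its effort, so the outline is sound but the bulk of the argument is still to be written.
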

\begin{proof}
%(1)For $C_{2r}$,~$N_r(x)= N_r(y)$ for every pair of vertices $x \neq
%y$,~therefore,~$C_{2r}$ has no $r$ - IC.~For $C_{2r+2}$, we note
%that $V(C_{2R+2})\backslash\{x_1\}$ is a $r$ - IC. Suppose $D$ is an
%$r$ - IC with at most $2r$ vertices. Assume without loss of
%generality that $x_1\not\in D$. Then using Lemma\ref{lem1}, in both
%the clockwise and counterclockwise direction implies that $x_2\in
%D$, $x_{2r+2}\in D$. If $x_{r+2}\not\in D$, then
%$D_r(x_1)=D_r(x_{r+2})$. Thus, there exists a vertex $x_i\not\in D
%(i \neq 1,r+2)$, then $D_r(x_{r+2})=D_r(i+r+1)$, it contradicts. So,
%we can conclude that $M_r^I(C_{2r+2})\geq 2r+1$, so $M_r^I(C_{2r+2})
%= 2r+1$.
By Lemma \ref{lem2}, we have the constraint $x_i\in D$ or $
x_{i+2r+1}\in D$ for $i = 1,\cdots,2k+1$, then there are $2k+1$ such
constraints and each $x_i$ is a term in exactly two of them. Thus,
if $D$ has $d$ vertices, at most $2d$ such constrains are satisfied.
It follows that $D$  must have at least $k+1$ vertices, i.e.,
$|D|\ge k+1$.

For notational convenience, we abbreviate $x_i$ by $i$ and $x_i\in
D$ or $x_j\in D$ by $i\vee j$ in the constraints. Choose $i\in
\{1,\cdots,2r+1\}$, consider the following stream of constraints,
which we call {\sl stream $i$}:
 $$i \vee i + (2r+1) \vee i +
2(2r+1) \vee \cdot\cdot\cdot \vee i + g_i(2r+1) \vee h_i,
$$
 where $i + g_i(2r+1) \le 2k+1 < i + (g_i+1)(2r+1)
\equiv h_i ~(\mod (2k+1))$ and $h_i \in \{1, 2,\cdots, 2r+1\}$.

 Suppose $k = (2r+1)p + q$, where $q \in \{0, 1, \cdots, 2r\}$. %\backslash\{r\}$
 %(Note that $gcd(2k+1,2r+1)=1$ implies that $q\neq r$).
 It is easy to know that $h_1 = 1$ if and only if $k = (2r+1)p + r$.
Then  stream $1$ leads into stream $h_1$, which leads into stream
$h_{h_1}$, and so on, end with last $h_i=1$. For example, $k=10$ and
$r = 2$, stream $1$: $1\vee 6\vee 11 \vee 16 \vee 21 \vee 5$ which
leads into stream $5$: $5\vee 10\vee 15\vee20\vee 4$, it leads into
stream $4$, and so on, end with stream $1$. When $\gcd
(2r+1,2k+1)=1$, putting the stream together in this order gives us
all constraints, and we call it {\sl full constraint stream}. When
$\gcd(2r+1,2k+1)\neq 1$, it gives us a full stream, we call full
stream $1$, which contains $\frac{2k+1}{\gcd(2r+1,2k+1)}$
constraints and $\frac{2k+1}{\gcd(2r+1,2k+1)} +1$ vertices (where
two $1s$), similarly, we can get full stream $2$, full stream $3$,
$\cdots$, full stream $\gcd(2r+1,2k+1)$. For example, if $k=10$ and
$r = 4$, $\gcd(2r+1,2k+1) = 3$, full stream $3$ is given by $3\vee
12\vee 21\vee 9\vee 18\vee 6\vee 15\vee 3$.

Now we consider the case $\gcd(2r+1,2k+1)\neq 1$. In this case,
there are $\gcd(2r+1,n)$ full streams and each full stream has
$\frac{2k+1}{\gcd(2r+1,2k+1)}$ constraints, thus we need at least
$\lceil\frac{2k+1}{2\gcd(2r+1,2k+1)}\rceil$ vertices from each to be
put into $D$, which follows that $|D| \geq
\gcd(2r+1,2k+1)\lceil\frac{2k+1}{2\gcd(2r+1,2k+1)}\rceil$. Note that
$\frac{2k+1}{\gcd(2r+1,2k+1)}$ is odd, hence each full stream has
even vertices. Let us denote $f_i^j$ the $j$-th vertex of full
stream $i$ for $1\le i\le \gcd(2r+1,2k+1)$ and $1\le j\le
\frac{2k+1}{\gcd(2r+1,2k+1)}+1$. Let $D = \{f_i^j: $ $1\le i\le
\gcd(2r+1,2k+1)$, $1\le j\le \frac{2k+1}{\gcd(2r+1,2k+1)}+1$, $i+j$
is odd $\}$.
% where the parity of $i$ and
%$j$ is different,
It is easy to check that $D$ satisfies the conditions in Lemma
\ref{lem2}, and hence $D$ is an $r$-IC of $C_{2k+1}$ with
$\gcd(2r+1,2k+1)\lceil\frac{2k+1}{2\gcd(2r+1,2k+1)}\rceil$ vertices.
The proof of (a) is complete. In the following, we assume that
$\gcd(2r+1,2k+1)=1$.

%%%%%%%%%%%%%

%When $gcd(2r+1,n)\neq 1$, it gives us a full stream, we call full
%stream $1$, which contains $\frac{n}{gcd(2r+1,n)}$ constraints and
%$\frac{n}{gcd(2r+1,n)} +1$ vertices (where two $1s$), similarly, we
%can get full stream $2$, full stream $3$, $\cdots$, full stream
%$gcd(2r+1,n)$. For example, if $k=10$ and $r = 4$, $gcd(2r+1,n) =
%3$, full stream $3$ is given by $3\vee 12\vee 21\vee 9\vee 18\vee
%6\vee 15\vee 3$.

 %We first prove (c), in this case $k = (2r+1)p + r$, each
%stream begins and ends in $i$ and the vertices in the different
%streams are disjoint. Thus, to satisfy condition (2) of
%Lemma\ref{lem2}, we need to satisfy all of the constraints in each
%$stream i$ separately. Since each stream has $(2p+1)$ constraints,
%we need at least $(p+1)$ vertices from it to be put in $D$. Thus,
%$|D|\geq (p+1)(2r+1) =
%k+r+1$, which shows that $M_r^I(C_{2k+1}) \geq k+r+1$.\\
%Now we give an $r$ - IC $D$ with $|D|=k+r+1$ for $C_{2k+1}$ as
%follows: $2,4,6,8,... $ and $1,3,...,2r+1$.
%\\

We first consider the case $q=2r$. In this case,  the full
constraint stream consists of stream $1$, stream $2$, $\cdots$,
stream $2r+1$ in turn.
%We now turn to the proof of (d), $k = (2r+1)p + 2r$.\\
Suppose $D$ is an $r$-IC with $|D|=k+1$. Since the full constraint
stream has $2k+1$ constraints and each vertex is in exactly two
constraints, then if we can satisfy all the constraints with $k+1$
vertices of $D$, there must be exactly one constraint where both
vertices are in $D$, and all other constraints have exactly one of
their vertices in $D$. Without loss of generality, we assume that
$1\in D$ and  $1 + (2r+1)\in D$. Then, the rest of the membership of
$D$ is forced upon us:\\
\newline
$\bullet$ from stream $1$: use vertices $1$ and $1+z(2r+1)$, $z$ is odd;\\
$\bullet$ from stream $i$: use  vertices $i+z(2r+1)$, $z$ is odd, for $i=2,3,\cdots,2r+1$.\\
\newline
This satisfies condition (1) of Lemma \ref{lem2} and uses $k+1$
vertices. However, when $p\ge 1$, condition (2) of Lemma \ref{lem2}
is violated, since then $1+2(2r+1),2+2(2r+1),\cdots,(2r+1)+2(2r+1)$
are
not in $D$. So, we conclude that $M_r^I(C_{2k+1}) \geq k+2$. % if $k =
%(2r+1)p + 2r$ and $p\ge 1$.
%If $p =0$, we construct an $r$ - IC as follows: $D =
%\{1,2r+2,2r+3,...,4r+1\}$;\\
 Now we construct an $r$-IC with $k+2$ vertices as follows:\\
\newline
$\bullet$ stream $i$: select vertices $i+z(2r+1)$, $z$ is odd, $i \neq r+1$;\\
$\bullet$ stream $r+1$: select vertices $r+1+z(2r+1)$, $z$ is even;\\
 $\bullet$ add  vertices $1$ and $r+1+(2p+1)(2r+1)$.
\newline
%It is  easy to check that  Lemma \ref{lem2} holds.

We now turn to the case $q=0$. In this case,  $k=(2r+1)p$ and $p\ge
1$. The full constraint stream consists of stream $2r+1$, stream
$2r$, $\cdots$, stream $1$ in turn. Similar to the case $q=2r$, we
can prove that  $M_r^I(C_{2k+1}) \geq k+2$. %The proof
%of (a) is analogous,
We construct an $r$-IC $D$ with $k+2$ vertices as follows:\\
\newline
$\bullet$ stream $i$: select vertices $i+z(2r+1)$, $z$ is odd, $i = 1,2r+1$;\\
$\bullet$ stream $j$: select vertices $j+z(2r+1)$, $z$ is even, $j = 2,3,\cdots,2r$.\\
$\bullet$  add  vertices $1$ and $1 + 2p(2r+1)$.\\

%\newline
%At last, we prove (b), $k = (2r+1)p + q$.\\
%Case~1:~$1\leq q \leq r+1$, $p > 0$;\\
%Case~1.1:~$gcd(2r+1,n)\neq 1$;\\
%There are $gcd(2r+1,n)$ full streams and each full stream has
%$\frac{n}{gcd(2r+1,n)}$ constraints, thus we need at least
%$\lceil\frac{n}{2gcd(2r+1,n)}\rceil$ vertices from each to be put
%into $D$, which follows that $|D| \geq
%gcd(2r+1,n)\lceil\frac{n}{2gcd(2r+1,n)}\rceil$. Since $n$ is odd,
%$\frac{n}{gcd(2r+1,n)}$ is also odd, thus each full stream has even
%vertices. Let us denote $f_i^j$ the $j$-th vertex of $full stream
%i$,~$D = \cup \{f_i^j\}$, where the parity of $i$ and $j$ is
%different, it is easy to show that $D$ is an $r$ - IC for $C_n$.\\
%Case~1.2:~$gcd(2r+1,n) = 1$.\\
We now turn to the case $1\le q\le r-1$.  In this case, %there is
%only one full stream. By (1), we have $|D|\geq k+1$. To show that
%$|D|\leq k+1$,
we just need to construct an $r$-IC $D$ with $|D|= k+1$. As
discussion  above, if we select  $1$ and $1+(2r+1)$ into $D$, then
the other vertices of $D$ are fixed. Now we prove that $D$ satisfies
two conditions of Lemma \ref{lem2}, and hence $D$ is an $r$-IC.
Obviously,  it satisfies condition (1) of Lemma \ref{lem2}. We just
need to show it satisfies condition (2) of Lemma \ref{lem2}, i.e.
there are no $2r+1$ consecutive vertices none of which is in $D$.
Suppose to the contrary that  $i + g(2r+1), i+1 + g(2r+1), \cdots,
2r+1 + g(2r+1), 1 + (g+1)(2r+1), \cdots, i-1 + (g+1)(2r+1)\not\in D$
for some $i\in \{1,2,\cdots,2r+1\}$ and $g\in \{0,1,\cdots,2p\}$. If
$i = 1$, then $1 + g(2r+1), 2 + g(2r+1), \cdots, 2r+1 + g(2r+1)
\not\in D$. Since $1, 1+(2r+1)$ are selected into  $D$, then
$1+z(2r+1)\in D$ for all odd  $z\le 2p$. So $g$ must be even. On the
other hand, stream $1: 1\vee 1+(2r+1)\vee \cdots \vee 1+2p(2r+1)\vee
2r-2q+1$,  leads into stream $2r-2q+1$, and $2r-2q+1+z(2r+1)\in D$
for all even  $z\le 2p$, it contradicts that $g$ is even and
$2r-2q+1+g(2r+1)\not\in D$. So, $i>1$. Since $1+(g+1)(2r+1)\not\in
D$,  hence $g$ is odd.

As $2r-2q+1+z(2r+1)\in D$ for all even  $z\le 2p$,  we know that
$2r-2q+1\geq i$. If $2r-2q+1 \leq 2q+1$, then  stream $2r-2q+1$
leads into stream $4r-4q+1$ and $4r-4q+1+z(2r+1)\in D$ for all odd
$z\le 2p$, thus, $4r-4q+1<i$. It contradicts that $2r-2q+1\geq i$.
So, $2r-2q+1> 2q+1$. Then stream $2r-2q+1$ leads into stream $2r-4q$
and $2r-4q+z(2r+1)\in D$ for all even $z\le 2p$. Therefore, $2r-4q
\geq i$. Similarly, we can show that $2r-4q>2q+1$ and stream $2r-4q$
leads into stream $2r-6q-1$. Let $m_0$ be the minimum integer such
that $2+2r-m_0(2q+1)\le max\{i,2q+1\}$.
% and $2r-6q-1+z\times(2r+1)\in
%D$, where $z$ is
%even, so, $2r-6q -1 \geq i$.\\
%$\dots$ $\dots$\\
We have that: stream  $1 \Rightarrow$ stream  $2r+2-(2q+1)
\Rightarrow$ stream  $2r+2-2(2q+1)\Rightarrow \cdots \Rightarrow$
stream $2r+2-(m_0-1)(2q+1) \Rightarrow$ stream $2r+2-m_0(2q+1)$. By
the selection of $m_0$, we know that $2r+2-m(2q+1)+z(2r+1)\in D$ for
all $1\le m\le m_0$ and all even $z\le 2p$. Hence,
$2r+2-m_0(2q+1)\ge i$ and it implies that $2r+2-m_0(2q+1)\le 2q+1$.
Therefore, stream $2r+2-m_0(2q+1)$ leads into stream
$4r-2q+2-m_0(2q+1)$ and $4r-2q+2-m_0(2q+1)+z(2r+1)\in D$ for all odd
$z\le 2p$. So, we know that $4r-2q+2-m_0(2q+1)<i$. It contradicts
that $2r+2-m_0(2q+1)\ge i$.
 % $2r-2q+1-m(2q+1)
%> 2q+1$, for $m=1,2,3,...$, however, it is impossible.
So, there are no $2r+1$ consecutive vertices none of which is in
$D$, and hence  $D$ is an $r$-IC. Therefore, $M_r^I(C_{2k+1}) = k+1$
in this case.

Note that $\gcd(2k+1,2r+1)=1$ implies that $q\neq r$. At last, we
consider the case $r+1\le q\le 2r-1$. In this case,
$2k+1=(2p+1)(2r+1)+2(q-r)$. We just need to construct an $r$-IC $D$
with $|D|= k+1$. As discussion above, if we select  $2r+1$ and
$2r+1+(2r+1)$ into $D$, then the other vertices of $D$ are fixed.
Obviously,  $D$ satisfies condition (1) of Lemma \ref{lem2}. We just
need to show that there are no $2r+1$ consecutive vertices none of
which is in $D$. Suppose to the contrary that  $i + g(2r+1), i+1 +
g(2r+1), \cdots, 2r+1 + g(2r+1), 1 + (g+1)(2r+1), \cdots, i-1 +
(g+1)(2r+1)\not\in D$ for some $i\in \{1,2,\cdots,2r+1\}$ and $g\in
\{0,1,\cdots,2p+1\}$. Since $2r+1$ and $2r+1+(2r+1)$ are selected
into $D$, $2r+1+z(2r+1)\in D$ for all odd $z\le 2p$. Hence, $g$ is
even.

If $i = 1$, then $1 + g(2r+1), 2 + g(2r+1), \cdots, 2r+1 + g(2r+1)
\not\in D$. Consider stream $2r+1: 2r+1\vee 2r+1+(2r+1)\vee \cdots
\vee 2r+1+2p(2r+1)\vee 4r-2q+1$, which leads into stream $4r-2q+1$,
and $4r-2q+1+z(2r+1)\in D$ for all even  $z\le 2p+1$, it contradicts
that $g$ is even and $4r-2q+1+g(2r+1)\not\in D$. Hence $i>1$.

As $4r-2q+1+z(2r+1)\in D$ for all even  $z\le 2p+1$,  we know that
$4r-2q+1<i$. If $4r-2q+1 > 2(q-r)$, then  stream $4r-2q+1$ leads
into stream $6r-4q+1$ and $6r-4q+1+z(2r+1)\in D$ for all odd $z\le
2p+1$, thus, $6r-4q+1\ge i$. It contradicts that $4r-2q+1< i$. So,
$4r-2q+1 \le 2(q-r)$, then stream $4r-2q+1$ leads into stream
$8r-4q+2$ and $8r-4q+2+z(2r+1)\in D$ for all even $z\le 2p+1$.
Therefore, $8r-4q+2 < i$. Similarly, we can show that $8r-4q+2\le
2(q-r)$ and stream $8r-4q+2$ leads into stream $12r-6q+3$. The
following proof is similar to that of the case $1\le q\le r-1$. The
detail is left to readers. \hspace*{\fill}$\Box$

\begin{lemma}\label{lem19}
If $3r+2\le n\le 4r+1$, $D$ is a $r$-IC for  $C_n$ if and only if
$x_i\in D$ or $x_{i+2r+1}\in D$ for all $i\in \{1,\cdots,n\}$.
\end{lemma}
\begin{proof}
 Necessity  follows from Lemma \ref{lem1}. We shall now observe
 sufficiency. Assume that $D_r(x_i)= \emptyset$ for some $x_i\in V$.
 Then, none of vertices in $\{x_{i-r},x_{i-r+1},\cdots,x_{i+r}\}$ is in
$D$. As $x_i\in D$ or $x_{i+2r+1}\in D$ for all $i\in
\{1,\cdots,n\}$, it follows that these vertices
$x_{i+r+1},x_{i+r+2},\cdots,x_{i+3r+2}$ are all contained in $D$.
This contradicts that $n\le 4r+1$. Hence, $D_r(x_i)\neq \emptyset$
for all $x_i\in C_n$. The  proof of $D_r(x_i)\neq D_r(x_j)$ ( $i\neq
j$)  is same as that in Lemma \ref{lem2}, and hence omitted in here.
\hspace*{\fill}$\Box$
\end{proof}

Using Lemma \ref{lem19}, we can  prove the following theorem, which
has been shown by Gravier et al. (cf. Theorem 7 in \cite{GMS}). The
proof is similar to partial proof  of Theorem \ref{thm3}, and
omitted in here.
\begin{theorem}\label{thm20}(cf. Theorem 7 in
\cite{GMS}) $M_r^I(C_{2k+1})
=\gcd(2r+1,2k+1)\lceil\frac{2k+1}{2\gcd(2r+1,2k+1)}\rceil$ for
$3r+2\le 2k+1\le 4r+1$.
 \end{theorem}

%Let $m_0$ be the minimum integer such that $2+2r-m_0(2q+1)\le
%max\{i,2q+1\}$.
% and $2r-6q-1+z\times(2r+1)\in
%D$, where $z$ is
%even, so, $2r-6q -1 \geq i$.\\
%$\dots$ $\dots$\\
%We have that: stream  $1 \Rightarrow$ stream  $2r+2-(2q+1)
%\Rightarrow$ stream  $2r+2-2(2q+1) \Rightarrow$ stream $2r+2-3(2q+1)
%\Rightarrow \cdots \Rightarrow$ stream $2r+2-(m_0-1)(2q+1)
%\Rightarrow$ stream $2r+2-m_0(2q+1)$. By the selection of $m_0$, we
%know that $2r+2-m(2q+1)+z\times (2r+1)\in D$ for all $1\le m\le m_0$
%and all even $z$ satisfying  $2r+2-m(2q+1)+z\times (2r+1)\le
%2q+1+2p\times (2r+1)$. Hence, $2r+2-m_0(2q+1)\ge i$ and it implies
%that $2r+2-m_0(2q+1)\le 2q+1$. Therefore, stream $2r+2-m_0(2q+1)$
%leads into stream $4r-2q+2-m_0(2q+1)$ and $4r-2q+2-m_0(2q+1)+z\times
%(2r+1)\in D$ for all odd $z\le 2p$. So, we know that
%$4r-2q+2-m_0(2q+1)<i$. It contradicts that $2r+2-m_0(2q+1)\ge i$.

% Case~2: The proof is similar to Case~1.
\end{proof}
\vskip 0.2cm

% For  $n$ odd, what remains unknown in \cite{GMS,XKH}
%is  the case  $2r+5\le n\le 3r+1$.
Next, we discuss the value of $M_r^I(C_n)$ for $2r+5 \le n\le 3r+1$,
which is the remainder case in \cite{BCO,GMS,XKH}.  A lemma is given
firstly as follows.

\begin{lemma}\label{lem5}
If $2r+5 \leq n \leq 3r+1$, let $n = 2r+1+q$ ($4 \leq q \leq r$),
$D$ is an $r$-IC for $C_n$ if and only if \\
(1)~ $x_i\in D$ or $x_{i+q}\in D$ for all $i\in \{1,\cdots,n\}$; \\
(2)~ there is at most one set $\{x_{i+1}, x_{i+2},\cdots, x_{i+q}\}$
such that none of which is   in $D$.
\end{lemma}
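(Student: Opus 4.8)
The plan is to characterize $r$-ICs for $C_n$ with $2r+5 \le n \le 3r+1$ by translating the two defining properties of an $r$-IC—that every $D_r(x)$ is nonempty, and that distinct vertices get distinct $D_r$—into the two combinatorial conditions in the statement, writing $n = 2r+1+q$ with $4 \le q \le r$.

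For the necessity direction ($\Rightarrow$), condition (1) follows immediately from Lemma~\ref{lem1}: since $y_1 y_2 \cdots y_{2r+2}$ is a path of $2r+2$ vertices in $C_n$ (this requires $n \ge 2r+2$, which holds), we cannot have both endpoints $x_i \notin D$ and $x_{i+2r+1} = x_{i+q-1-r}\notin D$; rewriting the index shift gives $x_i \in D$ or $x_{i+q}\in D$ because $2r+1 \equiv q \pmod n$ is false—rather I should track indices carefully: $i + (2r+1) = i + n - q \equiv i - q$, so Lemma~\ref{lem1} gives $x_i\in D$ or $x_{i-q}\in D$ for all $i$, which after reindexing is exactly condition (1). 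For condition (2), I would argue by contradiction: if two disjoint or overlapping windows of $q$ consecutive vertices each avoid $D$, I would locate two vertices $x$, $y$ whose $r$-balls $N_r[\cdot]$ (of size $2r+1$) meet $D$ in the same set, contradicting that $D$ is an $r$-IC; the point is that when $q \le r$ a block of $q$ missing vertices is short enough that shifting the center of an $r$-ball by one can still leave $D_r$ unchanged if there is a second missing block appropriately placed, whereas one missing block is harmless.

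For the sufficiency direction ($\Leftarrow$), assume (1) and (2). First I would show $D_r(x_i)\ne\emptyset$ for every $i$: if $N_r[x_i]\cap D=\emptyset$ then $x_{i-r},\dots,x_{i+r}$ are all absent from $D$, which is $2r+1 > q$ consecutive missing vertices—this already contains more than one window of $q$ consecutive missing vertices (indeed $2r+1-q+1 = 2(r+1)-q \ge 2$ overlapping windows since $q\le r$ gives $2r+2-q\ge r+2\ge 2$), contradicting (2). Then I would show $D_r(x_i)\ne D_r(x_j)$ for $i\ne j$, using the by-now-standard argument from Lemma~\ref{lem2}: by symmetry take the clockwise distance from $x_i$ to $x_j$ minimal; condition (1) forces $x_{i-q+?}$ or an appropriate neighbor into $D$ at a position that distinguishes the two balls, with the at-most-one-window clause of (2) ruling out the one exceptional configuration where the distinguishing vertex would otherwise be missing.

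The main obstacle I expect is the delicate index bookkeeping in the sufficiency proof of $D_r(x_i)\ne D_r(x_j)$, specifically pinning down precisely which vertex of $D$ separates the two $r$-balls and verifying that the "at most one bad window" hypothesis is exactly what is needed—no more, no less—to guarantee its existence in every case; getting the case split (based on whether $x_j$ lies within the first $q$ positions clockwise of $x_i$, or further) to mesh cleanly with the window condition is where the real work lies, and it is also where one sees why the threshold is "at most one" rather than "none."
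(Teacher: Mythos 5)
Your overall strategy is the same as the paper's, and the parts you actually carry out are sound: deriving condition (1) from Lemma~\ref{lem1} via the congruence $i+(2r+1)\equiv i-q \pmod{n}$ is correct (and is equivalent to the paper's direct computation comparing $N_r[x_{i-r-1}]$ with $N_r[x_{i-r}]$), and your nonemptiness argument from condition (2) is a valid alternative to the paper's, which instead notes that $x_i$ and $x_{i+q}$ both lie in $N_r[x_i]$ because $q\le r$, so condition (1) alone gives $D_r(x_i)\neq\emptyset$.

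The genuine gap is that the two steps carrying the real content --- the necessity of (2) and the separation half of sufficiency --- are left as intentions (``I would locate two vertices\dots'', ``$x_{i-q+?}$ or an appropriate neighbor'') rather than arguments, and you explicitly defer the latter as ``where the real work lies.'' The single observation that closes both is one you circle but never state: since $n=(2r+1)+q$, the complement $V\setminus N_r[x_i]$ is exactly the $q$-window $\{x_{i+r+1},\dots,x_{i+r+q}\}$. With this in hand, necessity of (2) is immediate (a window avoiding $D$ means $D_r(x_{i-r})=D$, so two such windows give distinct vertices $x_{i-r}\neq x_{j-r}$ with $D_r(x_{i-r})=D_r(x_{j-r})=D$), and the separation argument splits cleanly with no delicate bookkeeping: taking the clockwise distance from $x_i$ to $x_j$ minimal, if $1\le j-i\le q$ then $x_{i+r+1}\in N_r[x_j]\setminus N_r[x_i]$ and $x_{i+r+q+1}\in N_r[x_i]\setminus N_r[x_j]$, and these two vertices form precisely a pair $(x_t,x_{t+q})$ with $t=i+r+1$, so condition (1) puts one of them in $D$; if $j-i>q$, the two complementary $q$-windows are disjoint (using $j-i\le n/2$), so condition (2) forces one of them to meet $D$ at a vertex lying in exactly one of $D_r(x_i)$, $D_r(x_j)$. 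Without identifying the ball complements as $q$-windows, your plan does not yet produce the separating vertex, so the sufficiency direction remains unestablished as written.
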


%\begin{proof}

\begin{proof}
($\Rightarrow$:)~Suppose to the contrary that $x_i \not\in D$ and
$x_{i+q}\not\in D$. Since  $N_r[x_{i-r-1}] = V\backslash \{x_i,
x_{i+1},\cdots, x_{i+q-1}\}$ and  $N_r[x_{i-r}] = V\backslash
\{x_{i+1}, x_{i+2},\cdots, x_{i+q}\}$, this leads to the equality
$D_r(x_{i-r-1}) = D_r(x_{i-r})$, a contradiction. If there exist two
distinct sets $\{x_{i+1}, x_{i+2},\cdots, x_{i+q}\}$ and $\{x_{j+1},
x_{i+2},\cdots, x_{j+q}\}$ such that none of which is in $D$, then
$D_r(x_{i-r}) = D_r(x_{j-r})$, which follows from $N_r[x_{i-r}] =
V\backslash\{x_{i+1}, x_{i+2},\cdots, x_{i+q}\}$ and $N_r[x_{j-r}] =
V\backslash \{x_{j+1}, x_{j+2},\cdots, x_{j+q}\}$.

($\Leftarrow$:)~For every vertex $x_i\in V$, $N_r[x_i] = \{x_{i-r},
x_{i-r+1},\cdots, x_{i+r}\}$.  As $q \leq r$, both $x_i$ and
$x_{i+q}$ are  in $N_r[x_i]$, and by condition (1),  we can conclude
that $D_r(x_i) \neq \emptyset$. For distinct vertices $x_i$ and
$x_j$, without loss of generality, we assume that $i < j$ and the
distance from $x_i$ to $x_j$  in a clockwise direction around the
cycle is no larger than in a counterclockwise direction. If $i+1
\leq j \leq i+q$,  we have $x_{i+r+1} \in N_r[x_j]\backslash
N_r[x_i]$ and $x_{i+r+q+1} \in N_r[x_i]\backslash N_r[x_j]$. Hence,
by condition (1), $D_r(x_i) \neq D_r(x_j)$. Assume that  $j > i+q$.
Let $A = \{x_{i+r+1}, x_{i+r+2}, \cdots, x_{i+r+q}\}$ and $B =
\{x_{j+r+1}, x_{j+r+2},\cdots, x_{j+r+q}\}$. Then  $N_r[x_{i}] =
V\backslash A$ and $N_r[x_{j}] = V\backslash B$. Since $j>i+q$ and
the distance from $x_i$ to $x_j$  in a clockwise direction around
the cycle is no larger than in a counterclockwise direction, it
implies that  $A \cap B = \emptyset$.  By condition (2), either
$A\cap D\neq \emptyset$ or $B\cap D\neq \emptyset$ holds. Without
loss of generality, we assume that $A\cap D \neq \emptyset$ and
$x_{i+r+t} \in D$ for some $t\in \{1,2,\cdots,q\}$. Then
 $x_{i+r+t} \not\in D_r(x_i)$, but  $x_{i+r+t} \in D_r(x_j)$ as
 $A\cap B=\emptyset$.
Hence, we have that $D_r(x_i) \neq D_r(x_j)$. \hspace*{\fill}$\Box$
\end{proof}

\begin{theorem}\label{thm6}
For the cycle $C_{2k+1}$ with $2r+5\le 2k+1\le 3r+1$, let $2k+1 =
2r+1+q = lq + m$, where $l \geq 3 $ is an integer and  $m \in
\{0, 1,\cdots, q-1\}$, %if $gcd(2r+1,2k+1)=1$ and $k=(2r+1)p+q$
%with $p\ge 1$ and $q\in \{ 0,1, \cdot,2r\}\backslash\{r\}$, then\\
%(a)~$M_r^I(C_{2k+1}) = k+2$ if $q = 0 $ or $2r$;\\
%(b)~$M_r^I(C_{2k+1})=k+1$ if $1 \leq q \leq r-1$ or $r+1\le q\le
%2r-1$.
%If $2r+5 \leq n \leq 3r+1$, and $n$ is odd, let $n = 2r+1+q = kq +
%m$, where $k \geq 3 $ is a positive number, $m \in \{0, 1, ..., q-1\}$,
then\\
%If $k$ is odd,
(1)~ $M_r^I(C_{2k+1})=k+2$ if $l$ is odd, $m=q-1$,  $2k+1\ge 5q$ or
$l$ is even, $m=1$;\\
(2)~ $M_r^I(C_{2k+1})=\gcd (q, 2k+1)\lceil\frac{2k+1}{2gcd(q,
2k+1)}\rceil$  if  otherwise.

%\[
%M_r^I(C_{2k+1}) = \ \bigg\{\begin{array}{ll} k+2, & k\ \ odd, m = q-1, 2k+1\geq 5q;\\
%k+2, & k\ \ even, m=1, 2k+1\ge 4q+1;\\
% gcd(q,
%2k+1)\lceil\frac{2k+1}{2gcd(q, 2k+1)}\rceil, & \mbox{otherwise.}
% \end{array}
%\]
%If $k$ is even,
%\[
%M_r^I(C_n) = \ \bigg\{\begin{array}{ll} \lceil\frac{n}{2}\rceil +
%1, &  m = 1, n \geq 4q + 1;\\
%gcd(q, n)\lceil\frac{n}{2gcd(q, n)}\rceil, & \mbox{other}.
% \end{array}
%\]
\end{theorem}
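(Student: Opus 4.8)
The plan is to mirror the structure of the proof of Theorem \ref{thm3}, but now working with the constraint system from Lemma \ref{lem5} rather than Lemma \ref{lem2}. The basic counting is the same: by Lemma \ref{lem5}(1) there are $2k+1$ constraints of the form $x_i \vee x_{i+q}$, each vertex appears in exactly two of them, so any $r$-IC $D$ satisfies $2|D| \ge 2k+1$, i.e. $|D| \ge k+1$. As before, group the constraints into streams: \emph{stream $i$} is the chain $i \vee i+q \vee i+2q \vee \cdots$ reduced modulo $2k+1$, where indices are taken cyclically. Exactly as in Theorem \ref{thm3}, if $\gcd(q,2k+1) = g \ne 1$ the constraints split into $g$ full streams each of odd length $\tfrac{2k+1}{g}$ (hence an even number of vertices), forcing $|D| \ge g\lceil \tfrac{2k+1}{2g}\rceil$; the alternating selection $D = \{f_i^j : i+j \text{ odd}\}$ on each full stream meets Lemma \ref{lem5}(1), and one checks Lemma \ref{lem5}(2) is satisfied because the forced "gaps" of $q$ consecutive missing vertices number at most one per full stream — here one must verify that in total at most one such gap survives, which is where the parity of $\tfrac{2k+1}{g}$ and the relabelling between streams must be tracked carefully. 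This handles all of case (2) with $g \ne 1$, and simultaneously the "if otherwise" in (1)/(2) with $g=1$ reduces to deciding, when $\gcd(q,2k+1)=1$, whether $k+1$ is achievable.

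So assume $\gcd(q,2k+1) = 1$, giving a single full constraint stream through all $2k+1$ constraints. As in Theorem \ref{thm3}, if $|D| = k+1$ then exactly one constraint has both endpoints in $D$ and all others exactly one; fixing $1 \in D$ and $1+q \in D$ forces the rest of $D$: from the stream starting at $1$ take the vertices $1 + zq$ with $z$ odd, and from each other stream take $i + zq$ with $z$ odd. This choice meets Lemma \ref{lem5}(1) automatically. The content is then entirely in Lemma \ref{lem5}(2): a $(k+1)$-set works \emph{iff} at most one block of $q$ consecutive vertices is disjoint from $D$. Translating the forced membership into arithmetic on the stream indices (using $2k+1 = lq+m$), one finds that the number of such empty blocks is governed by $l$ and $m$ and the parity of positions along the chain — precisely: if $l$ is odd and $m = q-1$ and $2k+1 \ge 5q$, or if $l$ is even and $m=1$, exactly one empty block occurs, so the forced $(k+1)$-set is a valid $r$-IC and $M_r^I = k+1$...

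\emph{(correction: re-examining, in those two subcases the forced configuration produces \textbf{more than one} empty block, so $k+1$ fails and $M_r^I = k+2$; one then exhibits an explicit $(k+2)$-set, obtained by perturbing one stream to an "even-$z$" selection and adding two extra vertices, exactly as in the $q=0$ and $q=2r$ constructions of Theorem \ref{thm3}, and checks it against Lemma \ref{lem5}. In all remaining subcases the forced $(k+1)$-set has at most one empty block, so $M_r^I = k+1 = \gcd(q,2k+1)\lceil \tfrac{2k+1}{2}\rceil$.)}

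The main obstacle is the case analysis hidden in Lemma \ref{lem5}(2): one must chase the "leads into" relation among streams — stream $1 \Rightarrow$ stream $h_1 \Rightarrow \cdots$ — keeping track of which residues carry an odd-$z$ selection and which carry even-$z$, and count the resulting length-$q$ all-missing blocks as a function of $l$ (parity) and $m$. This is the analogue of the long final paragraph of Theorem \ref{thm3}'s proof (the $1 \le q \le r-1$ and $r+1 \le q \le 2r-1$ cases), and I expect it to be similarly delicate, with the extra condition $2k+1 \ge 5q$ entering exactly to guarantee the chain of streams is long enough to force a second empty block when $l$ is odd and $m = q-1$; the small cases $2k+1 < 5q$ with $l$ odd, $m=q-1$ fall under the $\gcd$-formula branch and should be checked separately, as should the boundary between the two formulas when $\gcd(q,2k+1) = 1$ forces $\lceil \tfrac{2k+1}{2}\rceil = k+1$, making (2) consistent with the lower bound.
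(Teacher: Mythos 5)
Your plan follows the paper's proof essentially step for step: the same $2|D|\ge 2k+1$ counting, the same decomposition into $\gcd(q,2k+1)$ full streams with the alternating $f_i^j$ selection when $\gcd(q,2k+1)\neq 1$, and, when $\gcd(q,2k+1)=1$, the same forced-membership argument after fixing the unique doubled constraint, with the case split on the parity of $l$ and on $m\in\{1,q-1\}$ versus $1<m<q-1$ (your corrected reading of which subcases force $k+2$ agrees with the paper, as does your observation that $l$ odd, $m=q-1$, $2k+1<5q$ means $2k+1=4q-1$ and lands in the $\gcd$-formula branch). What you defer --- verifying condition (2) of Lemma \ref{lem5} for the forced $(k+1)$-sets by chasing the stream-to-stream transitions, and checking the explicit $(k+2)$-constructions --- is precisely the content of the paper's Cases 1--3, so the plan is sound but that verification constitutes most of the actual proof.
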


\begin{proof}
Since $2k+1= 2r+1+q = lq + m$, then $q$ is even, and hence $m$ is
odd.  Let $D$ be an $r$-IC for $C_{2k+1}$, by Lemma \ref{lem5},  it
must satisfy $2k+1$ constraints: $x_i \in D$ or $x_{i+q} \in D$ for
$i = 1, 2,\cdots, 2k+1$. Similarly, we abbreviate $x_i$ by $i$ and
$x_i\in D$ or $x_j\in D$ by $i\vee j$ in the constraints. For $i\in
\{1,2,\cdots,q\}$, we define  stream $i$ as follows:
 $$i \vee i + q \vee i +
2q \vee \cdot\cdot\cdot \vee i + g_iq \vee h_i,
$$
 where $i + g_iq \le 2k+1 < i + (g_i+1)q
\equiv h_i ~(\mod (2k+1))$ and $h_i \in \{1, 2,\cdots, q\}$.

Then stream $1$ leads into stream $h_1$, which leads into stream
$h_{h_1}$, and so on, end with last $h_i=1$. When $gcd(q,2k+1)\neq
1$, it gives us a full stream, denoted by {\sl full stream $1$},
which contains $\frac{2k+1}{gcd(q,2k+1)}$ constrains and
$\frac{2k+1}{\gcd(q,2k+1)}+1$ vertices (where two $1$s). Similarly,
we can get {\sl full stream $2$}, $\cdots$, {\sl full stream
$gcd(q,2k+1)$}. There are $\gcd(q,2k+1)$ full streams and each full
stream contains $\frac{2k+1}{\gcd(q,2k+1)}$ constrains and
$\frac{2k+1}{\gcd(q,2k+1)}+1$ vertices (where two $i$s for $1\le
i\le \gcd(q,2k+1)$). To satisfy all constrains, we need  at least
$\frac{2k+1}{2\gcd(q,2k+1)}$ vertices from each full stream to be
put into $D$. This follows that $|D|\ge \gcd(q,
2k+1)\lceil\frac{2k+1}{2\gcd(q, 2k+1)}\rceil$.

Let us denote $f^j_i$ the $j$-th vertex of full stream $i$ for $1\le
i\le \gcd(q,2k+1)$ and $1\le j\le \frac{2k+1}{\gcd(q,2k+1)}+1$.
Since $2k+1$ is odd, thus each full stream has even vertices. Let
$D=\{f^j_i:$ $1\le i\le \gcd(q,2k+1)$, $1\le j\le
\frac{2k+1}{\gcd(q,2k+1)}+1$, $i+j$ is odd $\}$. It is easy to see
that $D$ satisfies conditions of Lemma \ref{lem5}. Hence, $D$ is an
$r$-IC
with $|D|=\gcd(q, 2k+1)\lceil\frac{2k+1}{2\gcd(q, 2k+1)}\rceil$. %For
%example, if $k=10$ and $r=7$, then $q=6$ and $\gcd(6,21)=3$. Full
%stream $1$ is given by $1\vee 7\vee 13\vee 19\vee 4\vee 10\vee
%16\vee 1$; Full stream $2$ is given by $2\vee 8\vee 14\vee 20\vee
%5\vee 11\vee 17\vee 2$; Full stream $3$ is given by $3\vee 9\vee
%15\vee 21\vee 6\vee 12\vee 18\vee 3$. Then the set
%$\{7,19,10,1,2,14,5,17,9,21,12,3\}$ is an $r$-IC for $C_{21}$.

%similarly, we denote $x_i$ by $i$,  $x_i \in D \vee x_{j} \in D$ by
%$i \vee j$, $stream~i: i \vee i+1\times q \vee i+2\times q \vee ...
%\vee i+g_i \times q \vee h_i$, where $i + g_i\times q \leq n <
%i+(g_i+1)\times q \equiv h_i~~(mod
%~n)$.\\
We now turn to the case $ \gcd(q,2k+1)=1$.  $\gcd(q, 2k+1) = 1$
implies that there is only one full stream, which contains  $2k+1$
constraints. We discuss it as the following cases.

Case~1. $m=q-1$, $2k+1=lq+q-1$. % and $l$ is odd.

 In this case, the full stream consists of stream $1$, stream $2$, $\cdots$, stream $q$ in turn.
 Suppose that  $D$ is  an $r$-IC for $C_{2k+1}$ with  $|D|=k+1$. Since there are
$2k+1$ constraints,  there must be exactly one constraint where both
vertices are in $D$, and all other constraints have exactly one of
their vertices in $D$. Without loss of generality, we take $1$ and
$1+q$ in $D$, then the rest of the membership of $D$ is  forced upon
us. If $l$ is even,  the membership of $D$ is just  the following
vertices:\\
\newline
$\bullet$ from stream $i$: use vertices $i+zq$, where the parity of
$i$ and $z$ is the same, for $i=1,2,\cdots, q$.\\

It is easy to check that there are no $q$ consecutive vertices none
of which is in $D$.  By  Lemma \ref{lem5}, $D$ is an $r$-IC with
$|D|=k+1$.

If $l$ is odd,  the membership of $D$ is just  the following
vertices:\\
\newline
$\bullet$ from stream $i$: use  vertices $i+zq$, $z$ is odd, $i=1,2,3,\cdots,q$.\\

If $l$ is odd and $2k+1<5q$, i.e., $2k+1=4q-1$, then $D$ satisfies
conditions of lemma \ref{lem5}. Hence, $D$ is also an $r$-IC  with
$|D|=k+1$. However, when  $l$ is odd and $2k+1\ge 5q$, condition (2)
of Lemma \ref{lem5} is violated, there exist  two sets  $\{1+2q,
2+2q,\cdots, q+2q\}$ and $\{1+4q, 2+4q,\cdots, q+4q\}$ such that
none of which is in $D$. Then $D$ is not an $r$-IC. So, we conclude
that $M_r^I(C_{2k+1}) \geq k+2$. Now we construct an $r$ - IC with $k+2$ vertices as follows:\\
\newline
$\bullet$ from stream~$i:$ use vertices $i+zq$, $z$ is odd and $i
\neq\frac{q}{2}+1$;\\
$\bullet$ from stream $\frac{q}{2}+1:$ use vertices $\frac{q}{2}+1+zq$, $z$ is even; \\
$\bullet$ add the vertex $\frac{q}{2}+1+lq$.\\
%Case~1.2:~$k$ is even\\
%In this case, the vertices in $D$ include:~\\
%from $stream~i$:~$i+zq$, where the parity of $i$ and $z$ is the same.\\

Case~2.~$m = 1, 2k+1 = lq+1$.

In this case, the full stream consists of stream $q$, stream $q-1$,
$\cdots$, stream $1$ in turn. We can also  prove that
$M_r^I(C_{2k+1})=k+1$ if $l$ is odd and $M_r^I(C_{2k+1})\ge k+2$ if
$l$ is even. The proof is analogous with case 1 and  is omitted in
here. If $l$ is even, we can construct an $r$-IC with $k+2$
vertices as follows:\\
\newline
$\bullet$ from stream~$i:$ use vertices $i+zq$, $z$ is odd and $i
\neq\frac{q}{2}+1$;\\
$\bullet$ from stream $\frac{q}{2}+1:$ use vertices $\frac{q}{2}+1+zq$, $z$ is even. \\

% Case~2.1: If $k$ is odd, then it is
%similar to prove that $D$ is an $r$ - IC and $M_r^I(C_n) =
%\lceil\frac{n}{2}\rceil$.\\
%Case~2.2: If $k$ is even, then the proof is analogous with Case~1.1,
%we conclude that $M_r^I(C_n) = \lceil\frac{n}{2}\rceil$, when $n <
%4q+1$; $M_r^I(C_n) = \lceil\frac{n}{2}\rceil +1$, otherwise.\\
Case~3.~$1 < m < q -1, 2k+1 = lq + m$

 Let $D$ denote an $r$-IC for $C_{2k+1}$ with $k+1$ vertices. When $l$ is odd, without loss of generality, we
take $q$ and $2q$ in $D$, the rest of the membership of $D$ is
forced upon us, and condition (1) of Lemma \ref{lem5} holds. Next,
we prove that  there are no $q$ consecutive vertices none of which
is in $D$. i.e., $D$ satisfies a stronger property  than condition
(2) of Lemma \ref{lem5}.  Suppose to the contrary that  $i + pq,
i+1+pq,\cdots, q+pq, 1+(p+1)q, 2+(p+1)q,\cdots, (i-1)+(p+1)q \not\in
D$ for some $i\in \{1,2,\cdots,q\}$ and $p\in \{0,1,\cdots l\}$. By
the selection of $D$, we know that $q\in D$ and $q+zq\in D$ for all
odd $z\le l $. So, $p$ is an even positive integer. If $i = 1$, it
implies that $1+pq, 2+pq,\cdots, q+pq \not\in D$. Stream $q: q\vee
2q\vee \cdots \vee lq\vee q-m$,  leads into stream $q-m$, and
$q-m+zq\in D$ for all even  $z\le l$, it contradicts that $p$ is
even and $q-m+pq\not\in D$. Hence, $i>1$.

Since $q-m+zq \in D$ for all even  $z\le l$, then $q-m\le i-1$. If
$q-m>m$, then stream $q-m$ leads into  stream $q-2m$, and $q-2m+zq
\in D$ for all odd $z\le l$.  Thus, $q-2m \geq i$. It contradicts
that $q-m\le i-1$. Hence, $q-m \leq m$. Then stream $q-m $ leads
into stream $2q-2m$, and $2q-2m+zq \in D$ for all even $z\le l$.
Therefore, $2q-2m \le i-1$. Similarly, we have $2q-2m\le m$ and
stream $2q-2m$ leads into stream $3q-3m$. Let $t_0$ be the minimum
integer such that $t_0(q-m)> max\{i-1,m\}$. We have that: stream $q
\Rightarrow$ stream  $q-m \Rightarrow$ stream  $2(q-m)) \Rightarrow$
$ \cdots \Rightarrow$ stream $(t_0-1)(q-m) \Rightarrow$ stream
$t_0(q-m)$. By the selection of $t_0$, we know that $t(q-m)+zq\in D$
for all $1\le t\le t_0$ and all even $z\le l$. Hence, $t_0(q-m)\le
i-1$ and it implies that $t_0(q-m)> m$. Therefore, stream $t_0(q-m)$
leads into stream $t_0(q-m)-m$ and $t_0(q-m)-m+zq\in D$ for all odd
$z\le l$. So, we know that $t_0(q-m)-m\ge i$. It contradicts that
$t_0(q-m)\le i-1$.
 So, there are no $q$ consecutive vertices none of which is in
$D$, and hence  $D$ is an $r$-IC with $k+1$ vertices.

When $l$ is even, without loss of generality, we take $1$ and $1+q$
in $D$, the rest of the membership of $D$ is forced upon us. The
remainder  proof is analogous and is omitted in here.
$\hfill\Box$\\
\end{proof}

The value of $M_r^I(C_{2r+3})$ was first obtained in \cite{GMS}. We
present them here for completeness.

\begin{theorem}\label{thm21}(cf Theorem 5 in \cite{GMS})~
 $M_r^I(C_{2r+3}) =\lfloor\frac{4r+6}{3}\rfloor$ for all $r\ge 1$.
\end{theorem}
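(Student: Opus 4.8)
The plan is to reduce $r$-identifying codes in $C_{2r+3}$ to a transparent condition on a cyclic binary word and then solve the resulting extremal problem; the length $2r+3$ is exactly the one that makes this reduction clean.

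First I would carry out the reduction. In $C_{2r+3}$ every ball $N_r[x_i]$ has $2r+1$ vertices, so $V\setminus N_r[x_i]$ consists of exactly two vertices, and moreover these two are \emph{consecutive}, namely $x_{i+r+1}$ and $x_{i+r+2}$ (both lie at distance $r+1$ from $x_i$). Hence $D_r(x_i)=D\setminus\{x_{i+r+1},x_{i+r+2}\}$, and as $i$ ranges over all indices the pair $\{x_{i+r+1},x_{i+r+2}\}$ runs through all pairs of consecutive vertices. Encode $D$ by the cyclic word $b_1b_2\cdots b_{2r+3}$ with $b_j=1$ iff $x_j\in D$. Then $D_r(x_i)=D_r(x_j)$ exactly when two distinct consecutive-pair ``windows'' meet $D$ in the same subset, and a short case check shows this happens in only two ways: if the common value is a $2$-set then the two windows coincide (contradiction), if it is a singleton $\{x_\ell\}$ then the two windows must be $\{x_{\ell-1},x_\ell\}$ and $\{x_\ell,x_{\ell+1}\}$ and both neighbours of $x_\ell$ lie outside $D$ (a factor $010$ in $b$), and if it is empty then both windows avoid $D$ (two occurrences of the factor $00$). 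Since forbidding $010$ and a repeated $00$ already forces $|D|\ge 3$ (a lone or pair-of-isolated $1$ creates a $010$; an adjacent pair $D$ leaves a run of $\ge 3$ zeros), the domination requirement is then automatic. This gives the characterization: $D$ is an $r$-IC of $C_{2r+3}$ iff the cyclic word $b$ contains no factor $010$ and at most one occurrence of the factor $00$.

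Next, for the lower bound, decompose $b$ into its $t$ maximal runs of $1$s (lengths $a_1,\dots,a_t$) and $t$ maximal runs of $0$s (lengths $c_1,\dots,c_t$). If $t=0$ then $D=V$ and $|D|=2r+3>\lfloor(4r+6)/3\rfloor$, so assume $t\ge 1$. Forbidding $010$ forces each $a_i\ge 2$, so $|D|=\sum a_i\ge 2t$; the factor $00$ occurs $(\sum c_i)-t$ times, which is $\le 1$, so $\sum c_i\le t+1$ and $|D|=(2r+3)-\sum c_i\ge 2r+2-t$. Thus $|D|\ge\max\{2t,\,2r+2-t\}$, and minimizing over $t\ge 1$ (a one-line computation, splitting on $r\bmod 3$) gives $\min_{t\ge 1}\max\{2t,2r+2-t\}=\lfloor(4r+6)/3\rfloor$. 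For the matching construction I would exhibit, for each residue of $r$ mod $3$, a cyclic word satisfying the characterization with exactly $\lfloor(4r+6)/3\rfloor$ ones, built from copies of the block $110$ plus one longer block to absorb the remainder: $(110)^{(2r+3)/3}$ when $r\equiv 0$, $(110)^{2s}\,11100$ when $r=3s+1$, and $(110)^{2s+1}\,1100$ when $r=3s+2$. In each case every run of $1$s has length $\ge 2$ and at most one run of $0$s has length $2$, so by the characterization of Step 1 the corresponding vertex set is an $r$-IC of the stated size, and the theorem follows.

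The hard part is the reduction: one must notice that for this smallest admissible cycle the complement of a ball is a \emph{consecutive} pair, so that the signatures $D_r(x_i)$ are controlled by two-vertex windows sliding around the cycle, and then correctly identify that precisely the patterns $010$ and a repeated $00$ produce collisions; everything after that is bookkeeping and explicit examples. It is worth noting that the mere necessary condition ``$x_i\in D$ or $x_{i+2}\in D$ for all $i$'' coming from Lemma~\ref{lem1} is \emph{not} sufficient here (e.g.\ $0011\,0011\cdots$ satisfies it but has many $00$'s), which is exactly why $C_{2r+3}$ falls outside the earlier lemmas and needs its own treatment.
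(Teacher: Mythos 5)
Your proof is correct, and its engine is the same observation the paper uses: in $C_{2r+3}$ the complement of each ball $N_r[x_i]$ is a single pair of \emph{consecutive} vertices, so $D_r(x_{i-r-1})=D\setminus\{x_i,x_{i+1}\}$ and the identifying condition reduces to constraints on two-vertex windows. Your characterization (no factor $010$, at most one factor $00$) is exactly equivalent to the paper's necessary conditions (a) ``at most one adjacent pair outside $D$'' and (b) ``no pair at distance $2$ outside $D$'': a violation of (b) with the middle vertex in $D$ is a $010$, and with the middle vertex outside $D$ it is a $000$, i.e.\ two overlapping $00$'s. Where you genuinely diverge is in the bookkeeping. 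The paper proves only necessity of (a)--(b) and gets the lower bound by partitioning the cycle into blocks of three consecutive vertices (splitting the unique adjacent non-codeword pair across two blocks, so each $3$-block contributes at least two codewords); you instead prove the characterization as an equivalence and then run a maximal-run decomposition, getting $|D|\ge\max\{2t,\,2r+2-t\}$ and optimizing over $t$. Your version buys two things: the iff statement makes the verification of the constructions immediate (the paper leaves that check implicit), and the run-length count handles the residue cases of $2r+3 \bmod 3$ uniformly, whereas the block partition needs a small extra argument for the leftover block of one or two vertices. The constructions are essentially the same $110$-periodic patterns in both treatments. No gaps.
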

\begin{proof}
Let $D$ be an $r$-IC of $C_{2r+3}$. For all $i$ we have
$D_r^I(x_{i-r-1})=D\backslash \{x_i,x_{i+1}\}$. Thus, the two
following assertions are true:\\
(a) there is at most a pair $\{x_i,x_i+1\}$ such that $x_i\not\in D$
and $x_{i+1}\not\in D$;\\
(b) there is no pair $\{x_i,x_{i+2}\}$ such that $x_i\not\in D$ and
$x_{i+2}\not\in D$. (For otherwise,
$D_r^I(x_{i-r-1})=D_r^I(x_{i-r})$).

Let us then partition  the vertices of $C_{2r+3}$ into blocks of
three consecutive vertices, plus possibly one block consisting in
one or two vertices. If there is a pair $\{x_i,x_{i+1}\}$ such that
$x_i\not\in D$ and $x_{i+1}\not\in D$, then we can partition the
vertices such that $x_i$ and $X_{i+1}$ are not in the same block. By
(a) we know that there is at most one such pair. By (b), any
three-element block of the partition contains at least one vertex in
$D$. This leads to the inequality $M_r^I(C_{2r+3}) \ge
\lfloor\frac{4r+6}{3}\rfloor$.

Now we construct an $r$-IC of $C_{2r+3}$ to attaint this bound.\\
\newline
$\bullet$ $2r=0(\mod \ \ 3)$, $D=\{x_i| i=1(\mod \ \ 3)$ or $2(\mod
\ \ 3)\}$;\\
$\bullet$ $2r=1(\mod\ \ 3)$, $D=\{x_i| 1\le i\le 2r+2, i=1(\mod\ \
3)$ or
$2(\mod\ \ 3)\}$;\\
$\bullet$ $2r=2(\mod\ \ 3)$, $D=\{x_i| 1\le i\le 2r+1, i=1(\mod\ \
3)$ or $2(\mod\ \ 3)\}\cup \{x_{2r+3}\}$.
$\hfill\Box$\\
\end{proof}

\section{$r$-identifying codes for path  $P_n$}

We turn now to the path $ P_n$.  We assume that the vertices of
$P_n$ have labeled consecutively as $x_1,x_2,\cdots,x_n$. First it
is easy to know that $M_r^I(P_n)$ is undefined if and only if $n\le
 2r$. In the following, we assume that $n\ge 2r+1$. %we give some lemmas.

\begin{lemma}\label{lem9}
If $D$ is an $r$-IC for $P_n$, then $x_{r+2}, x_{r+3}, \cdots,
x_{2r+1} \in D$ and $x_{n-r-1}, x_{n-r-2},\cdots , x_{n-2r} \in D$.
\end{lemma}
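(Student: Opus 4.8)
The plan is to exploit the fact that near an endpoint of the path the balls $N_r[x_i]$ are \emph{initial segments} of the vertex set, so consecutive such balls differ by exactly one vertex, and that extra vertex is forced into $D$ by the requirement that distinct vertices get distinct codes.

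Concretely, first I would record that for every $i$ with $1\le i\le r+1$ we have $N_r[x_i]=\{x_1,x_2,\ldots,x_{i+r}\}$: since $i-r\le 1$ the ball reaches the left endpoint, and since $n\ge 2r+1$ we have $i+r\le 2r+1\le n$, so the ball is exactly the stated initial segment (in particular it is not all of $V$). It follows that for $1\le i\le r$ the balls are nested, $N_r[x_i]\subset N_r[x_{i+1}]$, and $N_r[x_{i+1}]\setminus N_r[x_i]=\{x_{i+r+1}\}$, a single vertex (which exists because $i+r+1\le 2r+1\le n$). Intersecting with $D$ gives $D_r(x_i)\subseteq D_r(x_{i+1})$; because $D$ is an $r$-IC these two codes must differ, so the containment is strict and $D\cap\{x_{i+r+1}\}\neq\emptyset$, i.e. $x_{i+r+1}\in D$. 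Letting $i$ range over $1,2,\ldots,r$ yields $x_{r+2},x_{r+3},\ldots,x_{2r+1}\in D$.

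For the symmetric half I would apply the same argument to the reversed labelling $\sigma(x_i)=x_{n+1-i}$, which is an automorphism of $P_n$ and hence carries the $r$-IC $D$ to an $r$-IC $\sigma(D)$; the first part gives $x_{r+2},\ldots,x_{2r+1}\in\sigma(D)$, and applying $\sigma$ back sends these to $x_{n-r-1},x_{n-r-2},\ldots,x_{n-2r}$, which therefore lie in $D$.

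I do not expect a real obstacle here: the whole content is the observation that codes of consecutive near-endpoint vertices differ only in one coordinate. The only place needing a little care is checking that the relevant balls are genuinely initial segments differing by a single vertex rather than coinciding with $V$ — and this is exactly where the standing hypothesis $n\ge 2r+1$ is used.
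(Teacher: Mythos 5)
Your proof is correct and follows essentially the same route as the paper, which likewise deduces $x_{i+r+1}\in D$ from $D_r(x_i)\neq D_r(x_{i+1})$ for $i=1,\dots,r$ and handles the other end by symmetry; you have simply spelled out the nested-initial-segment observation that the paper leaves implicit.
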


\begin{proof}
For $i=1,2,\cdots, r$, $D_r(x_i)\neq D_r(x_{i+1})$ implies that
$x_{i+r+1}\in D$, and $D_r(x_{n-i})\neq D_r(x_{n-i+1})$ implies that
$x_{n-r-i}\in D$. $\hfill\Box$
\end{proof}

\begin{lemma}\label{lem10}
If $n\ge 2r+1$, $D$ is an $r$-IC for $P_n$ if and only if  the
following conditions holds:\\
(1) there are no $2r+2$ consecutive vertices with the first and last
not in $D$; \\
(2) there are no $2r+1$ consecutive vertices none of which is in
$D$;\\
(3) $\{x_{r+2}, x_{r+3},  \cdots, x_{2r+1}\}\subseteq D$ and
$\{x_{n-r-1}, x_{n-r-2}, \cdots, x_{n-2r}\} \subseteq D$.\\
(4) $\{x_1, x_2,  \cdots, x_{r+1}\}\cap  D\neq \emptyset$ and
$\{x_n, x_{n-1}, \cdots, x_{n-r}\} \cap D\neq \emptyset$.
\end{lemma}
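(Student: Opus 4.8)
The plan is to prove both implications, and on the sufficiency side to handle the two defining properties of an $r$-identifying code separately: \emph{domination} ($D_r(x)\neq\emptyset$ for all $x$) and \emph{separation} ($D_r(x_i)\neq D_r(x_j)$ for $i\neq j$).

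For necessity, each condition falls out of a result already proved or of the definition. Condition (3) is exactly Lemma~\ref{lem9}. Condition (2) holds because if $x_t,\dots,x_{t+2r}$ all lay outside $D$ then $N_r[x_{t+r}]\subseteq\{x_t,\dots,x_{t+2r}\}$ would give $D_r(x_{t+r})=\emptyset$. Condition (1) holds because a block $x_t,\dots,x_{t+2r+1}$ with $x_t,x_{t+2r+1}\notin D$ is a path on $2r+2$ vertices inside $P_n$, which has maximum degree $2$, so Lemma~\ref{lem1} applies and forbids it. Condition (4) holds because $N_r[x_1]=\{x_1,\dots,x_{r+1}\}$ and $N_r[x_n]=\{x_{n-r},\dots,x_n\}$, and each of these sets must meet $D$.

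For sufficiency, assume (1)--(4). Domination is immediate: for $i\le r+1$ we have $N_r[x_i]\supseteq\{x_1,\dots,x_{r+1}\}$, which meets $D$ by (4); symmetrically for $i\ge n-r$; and for $r+2\le i\le n-r-1$ the set $N_r[x_i]=\{x_{i-r},\dots,x_{i+r}\}$ is a window of $2r+1$ consecutive vertices, which meets $D$ by (2). For separation, take $i<j$. If $j\ge i+2r+1$ then $N_r[x_i]$ and $N_r[x_j]$ are disjoint and both nonempty, so they differ. Otherwise $j\le i+2r$, and I split into three cases. If $i\le r$, then $r+2\le i+r+1\le 2r+1$, so $x_{i+r+1}\in D$ by (3), and one checks (using $j\le i+2r$ and $n\ge 2r+1$) that $x_{i+r+1}\in N_r[x_j]\setminus N_r[x_i]$; hence $x_{i+r+1}\in D_r(x_j)\setminus D_r(x_i)$. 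If $j\ge n-r+1$, then symmetrically $x_{j-r-1}\in D$ by (3) and lies in $N_r[x_i]\setminus N_r[x_j]$. In the remaining case $r+1\le i$ and $j\le n-r$ (so all of the indices $i-r,\dots,i+r+1$ are legitimate): at least one of $x_{i-r}$ and $x_{i+r+1}$ lies in $D$, for otherwise the $2r+2$ consecutive vertices $x_{i-r},\dots,x_{i+r+1}$ would violate (1); since $x_{i-r}\in N_r[x_i]\setminus N_r[x_j]$ and $x_{i+r+1}\in N_r[x_j]\setminus N_r[x_i]$, that vertex distinguishes $D_r(x_i)$ from $D_r(x_j)$.

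The main obstacle is the separation step, and within it the boundary bookkeeping: when $i$ or $j$ is within distance $r$ of an end of the path, $N_r[x_i]$ or $N_r[x_j]$ is a truncated window rather than a full window of $2r+1$ vertices, so I must recheck carefully which of the two neighborhoods the witnesses $x_{i+r+1}$, $x_{i-r}$, $x_{j-r-1}$ belong to, and I must make sure the split into $\{j\ge i+2r+1\}$, $\{i\le r\}$, $\{i\ge r+1,\ j\ge n-r+1\}$, $\{i\ge r+1,\ j\le n-r\}$ really exhausts all pairs. Each such check is a one-line interval inequality, but that is where an oversight could hide; the genuinely interior case is painless once condition (1) is available.
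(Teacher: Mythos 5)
Your proof is correct and follows essentially the same route as the paper's: necessity from Lemma~\ref{lem1}, Lemma~\ref{lem9} and the domination requirement, and sufficiency by locating a separating codeword among $x_{i-r}$, $x_{i+r+1}$, $x_{j-r-1}$ according to the positions of $i$ and $j$. If anything, your case split is slightly more careful than the paper's, since you explicitly treat the right-end boundary case $j\ge n-r+1$ (where the window $x_{i-r},\cdots,x_{i+r+1}$ may run off the path, so condition (1) cannot be invoked and one must fall back on the right-hand half of condition (3)), a point the paper's argument passes over.
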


\begin{proof}
($\Rightarrow:$) Necessity of (1)  follows from Lemma \ref{lem1},
and necessity of (2) follows from $D_r(x)\neq \emptyset$ for every
vertex $x\in V$. Necessity of (3) follows from Lemma \ref{lem9}, and
necessity of (4) follows from  $D_r(x_1)\neq \emptyset$ and
$D_r(x_n)\neq \emptyset$.

 ($\Leftarrow:$)  By conditions (2), (3) and (4), $D_r(x)\neq \emptyset$ for
every vertex $x\in V$. Consider $x_i$ and $x_j$, without loss of
generality, we assume that $i<j$. If $i+1 \leq j \leq i+2r+1$ and
$i>r$,  by condition (1), either $x_{i-r} \in D$ or $x_{i+r+1} \in
D$ holds, and hence $D_r(x_i) \neq D_r(x_j)$. If $i+1 \leq j \leq
i+2r+1$ and $i\le r$, by condition (3), we have $x_{i+r+1} \in D$,
and hence $D_r(x_i) \neq D_r(x_j)$. If $j > i+2r+1$ and  $i > r$, by
condition (2), $\{x_{i-r}, x_{i-r+1}, \cdots, x_{i+r}\}\cap D\neq
\emptyset$, so $D_r(x_i) \neq D_r(x_j)$.  If $j > i+2r+1$ and $i
\leq r$, by condition (4), $\{x_1, x_2, \cdots, x_{r+1}\}\cap D\neq
\emptyset$, so $D_r(x_i) \neq D_r(x_j)$. $\hfill\Box$ \\

\end{proof}

Lemma \ref{lem10} allows us to proceed for a path much as we did
with cycle. Constraint streams are again the focus of our argument.
Similarly, we use $i$ as an abbreviation for vertex $x_i$ and we
modify the definition of constraint stream $i$ to omit the last term
$h_i$. i.e., we define stream $i$  as follows:

 $$i \vee i + (2r+1) \vee i + 2(2r+1) \vee \cdots \vee i +
g_i(2r+1),
$$
 where $i + g_i(2r+1) \le n$ and $1\le i\le 2r+1$.

The following theorem gives all results  for $M_r^I(P_n)$.

\begin{theorem}\label{thm11}
Let $n=(2r+1)p+q$, $p\ge 1$, $q \in \{0,1,2,\cdots,2r\}$.\\
(1) If $q =0$, then $M_r^I(P_n)= \frac{(2r+1)p}{2}+1$ if $p$ is
even; $M_r^I(P_n)= \frac{(2r+1)(p-1)}{2}+2r$ if $p$ is odd.\\
(2) If $1 \leq q \leq r+1$,  then $M_r^I(P_n)= \frac{(2r+1)p}{2}+q$
if $p$ is even; $M_r^I(P_n)= \frac{(2r+1)(p-1)}{2}+2r+1$ if $p$ is
odd.\\
(3) If $r+2 \leq q \leq 2r$,  then $M_r^I(P_n)=
\frac{(2r+1)p}{2}+q-1$ if $p$ is even; $M_r^I(P_n)=
\frac{(2r+1)(p-1)}{2}+2r+1$ if $p$ is odd.
\end{theorem}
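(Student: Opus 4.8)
The plan is to argue entirely through Lemma~\ref{lem10}, using the constraint streams set up just before the theorem. Writing $n=(2r+1)p+q$, one checks that streams $1,\dots,q$ each impose $p$ of the ``gap'' constraints of Lemma~\ref{lem10}(1) on $p+1$ vertices, while streams $q+1,\dots,2r+1$ each impose $p-1$ such constraints on $p$ vertices. The constraints of a stream are the edges of a path and each vertex lies in at most two of them, so $D$ restricted to a stream is a vertex cover of that path, of size at least $\lceil g_i/2\rceil$; summing over the $2r+1$ streams gives the naive bound $|D|\ge(2r+1)p/2$ for $p$ even and $|D|\ge(2r+1)(p-1)/2+q$ for $p$ odd, which matches the claimed values up to an additive constant that remains to be produced.

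To produce that constant I would use that a path with an even number of edges has a unique minimum vertex cover -- the alternate interior vertices, which contains neither endpoint -- so that whenever $|D|$ equals the naive bound plus a small surplus, each stream must realize its own minimum given the constraints imposed on it, and a stream with an even number of constraints is then rigidly determined. Conditions (3) and (4) of Lemma~\ref{lem10} become constraints on stream endpoints: (3) forces the first vertex of streams $r+2,\dots,2r+1$ into $D$ and, after reducing $n-2r,\dots,n-r-1$ modulo $2r+1$, the last vertex of $r$ further streams; (4) forces $D$ to meet $\{x_1,\dots,x_{r+1}\}$ and $\{x_{n-r},\dots,x_n\}$. Working through the ranges $q=0$, $1\le q\le r+1$, $r+2\le q\le 2r$ -- which, with the parity of $p$, fixes which streams are long, which short, and hence which are rigid (long streams if $p$ is even, short streams if $p$ is odd) -- I would tally the surplus by the rules: forcing both endpoints of any stream costs exactly one extra vertex; forcing a single endpoint costs one if the stream is rigid and nothing otherwise; and the two endpoint-runs forced by (3) are not double-counted where they overlap on a stream. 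A short inclusion-exclusion shows they overlap on exactly $q-1$ streams in the middle range, and the surplus forced by (3) works out to $q-1$ for $p$ even with $q\ge1$ and, for $p$ odd, to $2r$ when $q=0$ and $2r+1-q$ when $q\ge1$.

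Condition (4) supplies the last unit in the two remaining sub-cases. For $p$ even and $1\le q\le r+1$, after paying the $q-1$ surplus from (3) every stream among $1,\dots,r+1$ is pinned so that its first vertex is not in $D$ (it is a rigid long stream, or a minimum-size short stream with its last vertex forced), whence $\{x_1,\dots,x_{r+1}\}\cap D=\emptyset$ and (4) forces one more vertex, for a total of $q$; for $p$ even and $q=0$, condition (3) already rigidifies the boundary streams so that (4) cannot be satisfied at the naive size, costing one; in all other cases (4) turns out to be free. For the upper bounds I would, in each case, exhibit $D$ explicitly -- the alternate pattern on rigid streams, the suitable shifted pattern on forced short streams, a near-alternating pattern on long streams arranged to include $x_1$, $x_{n-r}$, etc.\ where needed, plus one planted vertex at the doubly-forced streams and, for $p$ even with $q\le r+1$, one near position $r+1$ -- and then verify conditions (1)--(4) of Lemma~\ref{lem10} directly, which is routine once the pattern is written down. (The degenerate case $p=1$, where the short streams carry no constraint and are single vertices, fits the same argument with the convention that such a stream contributes $0$.)

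The main obstacle is this lower-bound bookkeeping: identifying which streams are forced to contain one endpoint, which two, how the two endpoint-runs of condition (3) overlap across all ranges of $q$, and above all recognizing that in the $p$ even, $1\le q\le r+1$ case the rigidity propagated by condition (3) makes condition (4) cost an extra vertex -- a point invisible from condition (3) alone. Once the forced structure is pinned down, both the lower bounds and the constructions fall out with only routine checking.
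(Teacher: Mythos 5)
Your proposal is correct and follows essentially the same route as the paper: both reduce the problem to the four conditions of Lemma~\ref{lem10}, decompose the vertex set into the $2r+1$ constraint streams, bound $|D|$ stream-by-stream as a vertex cover of a path with certain endpoints forced by conditions (3)--(4), and finish with explicit alternating constructions. The only difference is bookkeeping -- your ``naive bound plus surplus'' accounting, using the uniqueness of the minimum vertex cover of an even-length path to show condition (4) costs an extra vertex when $p$ is even and $q\le r+1$, replaces the paper's exhaustive enumeration of the cases (1A)--(1D), (2A)--(2D) for how condition (4) is met, and your surplus values ($q-1$ for $p$ even, $2r+1-q$ resp.\ $2r$ for $p$ odd, plus the unit from condition (4)) agree with the paper's minima in every range.
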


\begin{proof}
Let $D$ be an $r$-IC for $P_n$. We first discuss the case $q=0$.

(1) If $q=0$,  then $r+2, r+3, \cdots, 2r+1, 1+(p-1)(2r+1),
2+(p-1)(2r+1), \cdots, r+(p-1)(2r+1)\in D$, which follows from
condition (3) of Lemma \ref{lem10}. For $i \in \{ 1, 2,\cdots,
2r+1\}$, the constraint stream $i$ is given as follows: $i \vee
i+(2r+1) \vee \cdots \vee i+(p-1)(2r+1) $.
To satisfy condition (4) of Lemma \ref{lem10}, there are four possible cases:\\
(1A)~$r+1\in D$ and $r+1+(p-1)(2r+1) \in D$;\\
(1B)~$i\in D$ for some $i\in \{1, 2, \cdots, r\}$ and $r+1+(p-1)(2r+1) \in D$;\\
(1C)~$r+1\in D$ and $j+(p-1)(2r+1) \in D$ for some $j\in\{r+2, r+3, \cdots, 2r+1\}$;\\
(1D)~$i\in D$ for some $i\in \{1, 2, \cdots, r\}$ and  $j+(p-1)(2r+1) \in D$
 for some $ j\in\{r+2, r+3, \cdots, 2r+1\}$.

First consider the case (1A). For each stream $i$ ($i\in
\{1,2,\cdots,r\}$), we have already taken $i+(p-1)(2r+1)$ into $D$,
satisfying the last constrain, and there are $p-2$ remaining
constraints. So, we need take at least
  $\lceil\frac{p-2}{2}\rceil$ vertices from each stream
$i$ ($i\in \{1,2,\cdots,r\}$)  into $D$ to satisfy the remaining
constraint. Turn to stream $r+1$, since $r+1$ and $r+1+(p-1)(2r+1)$
are already put in $D$, satisfying the first and last constraints in
stream $r+1$, so, we need at least $\lceil\frac{p-3}{2}\rceil$ to
satisfy the remaining constraints. Similarly, it requires  at least
$\lceil\frac{p-2}{2}\rceil$ vertices to satisfy the remaining
constraints in each stream $i$ for $i=r+2,r+3,\cdots,2r+1$. Hence,
we need at least
$2r+2+\lceil\frac{p-3}{2}\rceil+2r\lceil\frac{p-2}{2}\rceil$
vertices in all. %If $p$ is even, the number is
%$\frac{(2r+1)p}{2}+1$, whereas if $p$ is odd, the number is
%$\frac{(2r+1)(p-1)}{2}+2r+1$.

Now we consider the case (1B). For stream $i$, we have already taken
$i$ and $i+(p-1)(2r+1)$ into $D$, satisfying the first and the last
constrains,  then we need at least $\lceil\frac{p-3}{2}\rceil$
vertices to satisfy the remaining constraints. For each of  the
other streams, we need at least $\lceil\frac{p-2}{2}\rceil$ vertices
to satisfy the remaining constraints. Thus, we need at least
$2r+2+\lceil\frac{p-3}{2}\rceil+2r\lceil\frac{p-2}{2}\rceil$
vertices.

We now turn to the case (1C). For stream $j$, we have already taken
$j$ and $j+(p-1)(2r+1)$ into $D$, satisfying the first and the last
constrains,  then we need at least $\lceil\frac{p-3}{2}\rceil$
vertices to satisfy the remaining constraints. For each of  the
other streams, we need at least $\lceil\frac{p-2}{2}\rceil$ vertices
to satisfy the remaining constraints. Thus, we need at least
$2r+2+\lceil\frac{p-3}{2}\rceil+2r\lceil\frac{p-2}{2}\rceil$
vertices.

 At last we consider the case (1D). For stream $i$,  we have already taken $i$ and $i+(p-1)(2r+1)$ into $D$,
satisfying the first and the last constrains,  then we need at least
$\lceil\frac{p-3}{2}\rceil$ vertices to satisfy the remaining
constraints. For stream $j$, similarly, we need at least
$\lceil\frac{p-3}{2}\rceil$ vertices to satisfy the remaining
constraints. For stream $r+1$, we need at leat
$\lceil\frac{p-1}{2}\rceil$ vertices to satisfy its constrains. For
each of the other streams, we need at least
$\lceil\frac{p-2}{2}\rceil$ vertices to satisfy the remaining
constraints. Hence, we need at least
$2r+2+2\lceil\frac{p-3}{2}\rceil+(2r-2)\lceil\frac{p-2}{2}\rceil+\lceil\frac{p-1}{2}\rceil$
vertices.

 Finally, comparing the required minimum number of $D$ in
all four cases, we see that when $p$ is even, the minimum is
$\frac{(2r+1)p}{2}+1$, which is achieved in both cases (1A), (1B)
and (1C) and when $p$ is odd, the minimum is
$\frac{(2r+1)(p-1)}{2}+2r$, which is achieved in case (1D).

Next, we construct an $r$-IC, achieving the bound as follows:\\
 When $p$ is even, \\
 \newline
 $\bullet$ from stream~$i$: use vertices $i + z(2r+1)$, where $z$ is even, for
$i = 1, r+2, r+3, \cdots, 2r+1$;\\
$\bullet$  from stream~$j$: use vertices $j + z(2r+1)$,~where
$z$ is odd, for $j =2, 3, \cdots, r+1$; \\
$\bullet$  add the vertex $1+(p-1)(2r+1)$. \\
 When~$p$ is odd, \\
 \newline
 $\bullet$ from stream~$i$: use vertices~$i +
z(2r+1)$, where $z$ is even, for $i = 1, r+2, r+3, \cdots,
2r+1$;\\
$\bullet$ from stream~$j$: use vertices~$j + z(2r+1)$, where $z$ is
odd, for $j =2, 3,\cdots, r+1$;\\
$\bullet$  add vertices $i+(p-1)(2r+1)$  for $i=2, 3, \cdots, r$.

%%%%%%%%%%%%%check in here09-8-6
(2) If $1\le q\le r+1$,  then $r+2, r+3, \cdots, 2r+1,
q+1+(p-1)(2r+1), q+2+(p-1)(2r+1), \cdots, q+r+(p-1)(2r+1)\in D$,
which follows from condition (3) of Lemma \ref{lem10}. For $i \in \{
1, 2,\cdots, q\}$, the constraint stream $i$ is given as follows: $i
\vee i+(2r+1) \vee \cdots \vee i+p(2r+1) $. For $i\in \{q+1, \cdots,
2r+1\}$, the constraint stream $i$ is given as follows: $i\vee
i+(2r+1)\vee \cdots \vee i+(p-1)(2r+1)$.
To satisfy condition (4) of Lemma \ref{lem10}, there are four possible cases:\\
(2A)~$i\in D$ for some $i\in \{1,\cdots,q\}$ and $j+p(2r+1) \in D$ for some $j\in \{1,\cdots,q\}$;\\
(2B)~$i\in D$ for some $i\in \{1,  \cdots, q\}$ and $j+(p-1)(2r+1) \in D$ for some $j\in \{q+r+1,\cdots,2r+1\}$;\\
(2C)~$i\in D$ for some $i\in \{q+1,\cdots,r+1\}$ and $j+p(2r+1) \in D$ for some $j\in \{1,\cdots,q\}$;\\
(2D)~$i\in D$ for some $i\in \{q+1,\cdots,r+1\}$ and $j+(p-1)(2r+1)
\in D$ for some $j\in \{q+r+1,\cdots,2r+1\}$.

First consider the case (2A). We first discuss the situation $i\neq
j$. For stream $i$, we have already taken $i$ into $D$, satisfying
the first constrain in stream $i$, and hence we need take at least
$\lceil \frac{p-1}{2}\rceil$ vertices from stream $i$ to satisfy
remaining constraints. For stream $j$, we have already taken
$j+p(2r+1)$ into $D$, satisfying the last constrain in stream $j$,
and hence we need take at least $\lceil \frac{p-1}{2}\rceil$
vertices from stream $j$ to satisfy remaining constraints. For each
stream $t$ with $t\in \{1,\cdots,q\}\backslash \{i,j\}$,   we need
take at least
  $\lceil\frac{p}{2}\rceil$ vertices   into $D$ to satisfy its
constraint. For each stream $t$ with $t\in \{q+1,\cdots,r+1\}$, we
have already taken the vertex $t+(p-1)(2r+1)$ into $D$, satisfying
the last constrain in stream $t$, and there are $p-2$ remaining
constraints. Hence, we need take at least $\lceil
\frac{p-2}{2}\rceil$ vertices from stream $t$ to satisfy remaining
constraints. For each stream $t$ with $t\in \{r+2,\cdots,r+q\}$, we
have already taken $t$ and  $t+(p-1)(2r+1)$ into $D$, satisfying the
first and the last constrains in stream $t$, and there are $p-3$
remaining constraints. Hence, we need take at least $\lceil
\frac{p-3}{2}\rceil$ vertices from stream $t$ to satisfy remaining
constraints. For each stream $t$ with $t\in \{r+q+1,\cdots,2r+1\}$,
we have already taken $t$  into $D$, satisfying the first  in stream
$t$. Hence, we need take at least $\lceil \frac{p-2}{2}\rceil$
vertices  to satisfy remaining constraints. Therefore, we need at
least
$2r+2+(q-2)\lceil\frac{p}{2}\rceil+2\lceil\frac{p-1}{2}\rceil+(2r-2q+2)\lceil\frac{p-2}{2}\rceil
+(q-1)\lceil\frac{p-3}{2}\rceil$ vertices in all.

We now discuss the situation $i=j$. For stream $i$, we have already
taken $i$ and $i+p(2r+1)$ into $D$, satisfying the first and the
last constrains in stream $i$, and hence we need take at least
$\lceil \frac{p-2}{2}\rceil$ vertices from stream $i$ to satisfy
remaining constraints.  For each stream $t$ with $t\in
\{1,\cdots,q\}\backslash \{i\}$,  we need take at least
$\lceil\frac{p}{2}\rceil$ vertices   into $D$ to satisfy its
constraint. For each stream $t$ with $t\in \{q+1,\cdots,2r+1\}$, the
discussion is same as above. Therefore, we need at least
$2r+2+(q-1)\lceil\frac{p}{2}\rceil+(2r-2q+3)\lceil\frac{p-2}{2}\rceil
+(q-1)\lceil\frac{p-3}{2}\rceil$ vertices in all.

%Comparing the required minimum number of $D$ in these two
%situations, we see that  the minimum  is achieved in the situation
%$i\neq j$.

%%%%%%%check in here 09-08-07

We now consider the case (2B). Similarly, we need at least
$\lceil\frac{p-1}{2}\rceil$ vertices to satisfy the remaining
constrains in stream $i$. For each stream $t$ with $t\in \{1,\cdots,
q\}\backslash \{i\}$, we need at lest $\lceil\frac{p}{2}\rceil$
vertices to satisfy the remaining constrains. For each stream $t$
with $t\in \{q+1,\cdots, r+1\}$, we need at lest
$\lceil\frac{p-2}{2}\rceil$ vertices to satisfy the remaining
constrains.  For each stream $t$ with $t\in \{r+2,\cdots, r+q\}$, we
need at least $\lceil\frac{p-3}{2}\rceil$ vertices to satisfy the
remaining constrains. For stream $j$, we need at least
$\lceil\frac{p-3}{2}\rceil$ vertices to satisfy the remaining
constrains. For each stream $t$ with $t\in \{q+r+1,\cdots,
2r+1\}\backslash \{j\}$, we need at least
$\lceil\frac{p-2}{2}\rceil$ vertices to satisfy the remaining
constrains. Therefore, we need at least
$2r+2+(q-1)\lceil\frac{p}{2}\rceil+\lceil\frac{p-1}{2}\rceil+(2r-2q+1)\lceil\frac{p-2}{2}\rceil
+q\lceil\frac{p-3}{2}\rceil$ vertices in all.

We now turn to  the case (2C). We need at least
$\lceil\frac{p-1}{2}\rceil$ vertices to satisfy the remaining
constrains in stream $j$. For each stream $t$ with $t\in \{1,\cdots,
q\}\backslash \{j\}$, we need at least $\lceil\frac{p}{2}\rceil$
vertices to satisfy its constrains. For stream $i$, we need at least
$\lceil\frac{p-3}{2}\rceil$ vertices to satisfy the remaining
constrains. For each stream $t$ with $t\in \{q+1,\cdots,
r+1\}\backslash \{i\}$, we need at least $\lceil\frac{p-2}{2}\rceil$
vertices to satisfy the remaining constrains. For each stream $t$
with $t\in \{r+2,\cdots, r+q\}$, we need at least
$\lceil\frac{p-3}{2}\rceil$ vertices to satisfy the remaining
constrains. For each stream $t$ with $t\in \{r+q+1,\cdots, 2r+1\}$,
we need at least $\lceil\frac{p-2}{2}\rceil$ vertices to satisfy the
remaining constrains. Therefore, we need at least
$2r+2+(q-1)\lceil\frac{p}{2}\rceil+\lceil\frac{p-1}{2}\rceil+(2r-2q+1)\lceil\frac{p-2}{2}\rceil
+q\lceil\frac{p-3}{2}\rceil$ vertices in all.

At last we consider the case (2D). For each stream $t$ with $t\in
\{1,\cdots, q\}$, we need at least $\lceil\frac{p}{2}\rceil$
vertices to satisfy its constrains. For stream $i$, we need at least
$\lceil\frac{p-3}{2}\rceil$ vertices to satisfy its remaining
constrains. For each stream $t$ with $t\in \{q+1,\cdots,
r+1\}\backslash \{i\}$, we need at least $\lceil\frac{p-2}{2}\rceil$
vertices to satisfy its remaining constrains.     For each stream
$t$ with $t\in \{r+2,\cdots, r+q\}$, we need at least
$\lceil\frac{p-3}{2}\rceil$ vertices to satisfy the remaining
constrains. For stream $j$, we need at least
$\lceil\frac{p-3}{2}\rceil$ vertices to satisfy its remaining
constrains. For each stream $t$ with $t\in \{r+q+1,\cdots,
2r+1\}\backslash \{j\}$, we need at least
$\lceil\frac{p-2}{2}\rceil$ vertices to satisfy the remaining
constrains. Therefore, we need at least
$2r+2+q\lceil\frac{p}{2}\rceil+(2r-2q)\lceil\frac{p-2}{2}\rceil
+(q+1)\lceil\frac{p-3}{2}\rceil$ vertices in all.

Finally, comparing the required minimum number of $D$ in all four
cases, we see that when $p$ is even, the minimum is
$\frac{(2r+1)p}{2}+q$, which is achieved in case (2A) under the
situation $i=j$, and when $p$ is odd, the minimum is
$\frac{(2r+1)(p-1)}{2}+2r+1$, which is achieved in cases (2C),(2D)
and  case (2A) under the situation  $i\neq j$.

Next, we construct an $r$-IC, achieving the bound as follows:\\
 When $p$ is even, \\
 \newline
 $\bullet$ from stream~$i$: use vertices $i + z(2r+1)$, where $z$ is even, for
$i = 1, r+2, r+3, \cdots, 2r+1$;\\
$\bullet$  from stream~$j$: use vertices $j + z(2r+1)$,~where
$z$ is odd, for $j =2, 3, \cdots, r+1$; \\
$\bullet$  add vertices $i+(p-1)(2r+1)$ for $i=r+2, r+3,\cdots, r+q$. \\
 When~$p$ is odd, \\
 \newline
 $\bullet$ from stream~$i$: use vertices~$i +
z(2r+1)$, where $z$ is odd, for $i = 1, 2, \cdots,q$;\\
$\bullet$ from stream~$j$: use vertices~$j + z(2r+1)$, where $z$ is
even, for $j=q+1, q+2,\cdots, 2r+1$.\\
%$\bullet$  add $i+(p-1)(2r+1)$  for $i=2, 3, \cdots, r$.

 (3) The proof of  (3) is analogous. We
simply include the instruction for how to achieve an optimal set $D$ in this case.\\
When $p$ is even,\\
\newline
$\bullet$ from stream~$i$: use vertices~$i + z(2r+1)$,~where $z$ is
even, for $i= 1, r+2, r+3, \cdots, 2r+1$; \\
$\bullet$ from stream~$j$: use vertices~$j + z(2r+1)$,~where $z$ is
odd, for $j =2, 3, \cdots, r+1$;\\
$\bullet$  add vertices $i+(p-1)(2r+1)$ for $i=q+1, q+2, \cdots, 2r+1$ and $j+ p(2r+1)$ for $ j=2, 3,\cdots, q-r-1$.\\
When $p$ is odd,\\
\newline
$\bullet$   from stream $i$: use vertices~$i + z(2r+1)$,~where $z$
is odd, for $i=1, 2,\cdots, r$; \\
$\bullet$ from stream~$j$: use vertices~$j + z(2r+1)$,~where $z$ is
even, for $j= r+1, r+3, \cdots, 2r+1$.\\
%$\bullet$   add $q-r, q-r+1, \cdots, r+1$$. \hfill\Box$

\end{proof}
\section{$2$-locating dominating sets for cycle $C_n$}
Let $A$ and $B$ be two sets.  Define $A\triangle B$ as $(A- B)\cup
(B-A)$. For three vertices $x, u, v$, if $x\in D_r(u)\triangle
D_r(v)$, then we say $\{u,v\}$ are {\sl $r$-separated by $x$}, or
$x$ {\sl $r$-separates $\{u,v\}$}. Let $D$ be an $r$-LD for $C_n$.
Two different vertices $x$ and $y$ not in $D$ are {\sl
$D$-consecutive} if ether  $\{x+1, \cdots, y-1\}\subseteq D$ or
$\{y+1,\cdots,x-1\}\subseteq D$ holds. Note that a pair of
consecutive vertices $\{x,x+1\}$ not in $D$ are also
$D$-consecutive.

%The next lemma will help us to prove the exact value for
%$M_2^{LD}(C_n)$.

\begin{lemma}(\cite{BCO})\label{lem 12}
Let $r \geq 1$ be an integer. Suppose $D$ is an $r$-LD for $C_n$.
For every vertex $x$ in $D$, we have \\
(i) $x$ can $r$-separate at most two pairs of consecutive
vertices;\\
(ii) $x$ can $r$-separate at most two pairs of $D$-consecutive
vertices.
\end{lemma}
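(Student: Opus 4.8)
The plan is to work locally around the vertex $x \in D$ and use the explicit structure of the balls $N_r[\cdot]$ in a cycle. Recall that in $C_n$ we have $N_r[u] = \{u-r, u-r+1, \dots, u+r\}$, so $x$ lies in $D_r(u)$ precisely when $u \in \{x-r, \dots, x+r\}$, i.e.\ $x$ can only ever contribute to the codes of the $2r+1$ vertices in its own ball. Consequently, if $x$ is to $r$-separate a pair $\{u,v\}$ — meaning $x \in D_r(u) \triangle D_r(v)$ — then exactly one of $u,v$ lies in $N_r[x]$ and the other does not. So for part (i): the pairs of \emph{consecutive} vertices $\{w, w+1\}$ that $x$ can separate are exactly those straddling the boundary of $N_r[x] = \{x-r, \dots, x+r\}$, namely $\{x-r-1, x-r\}$ and $\{x+r, x+r+1\}$. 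These are the only two candidates, which is precisely the bound claimed in (i); note we do not even need $D$ to be an $r$-LD set for this part, only that $x \in D$.

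For part (ii) the argument is the same in spirit but one must be careful about what ``$D$-consecutive'' allows. A $D$-consecutive pair $\{u,v\}$ (with $u < v$ in the natural arc, all of $u+1, \dots, v-1$ in $D$) is separated by $x$ iff exactly one of $u, v$ belongs to $N_r[x]$. Since the whole arc from $u$ to $v$ apart from the endpoints is contained in $D$ and the endpoints are not, and since $N_r[x]$ is itself an arc of length $2r+1$, for $x$ to separate $\{u,v\}$ the arc $N_r[x]$ must contain exactly one endpoint; that forces $u$ or $v$ to be one of the two boundary vertices $x-r$ or $x+r$ of $N_r[x]$. The vertex $x-r$ (if it is outside $D$) can serve as an endpoint of at most one $D$-consecutive pair whose other endpoint lies inside $N_r[x]$ — the other endpoint is forced to be the nearest non-$D$ vertex on the $x$-side of $x-r$ — and likewise for $x+r$. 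This gives at most two separated $D$-consecutive pairs. One subtlety to address is whether a single $D$-consecutive pair could be ``double counted'' or whether $x-r$ and $x+r$ themselves might coincide or both be endpoints of the \emph{same} pair; since $2r+1 \geq 3$ these two boundary vertices are distinct, and a pair using both $x-r$ and $x+r$ as its endpoints would have all of $N_r[x]$ strictly between them lying in $D$ — in particular $x \in D$, which is fine — but then $x$ lies in $D_r$ of both endpoints only if they are in $N_r[x]$, which they are not by assumption, so such a pair is \emph{not} separated by $x$; hence no double counting.

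The main obstacle is bookkeeping rather than depth: one has to be precise that ``$x$ separates the pair'' is an asymmetric boundary condition on $N_r[x]$, and that the definition of $D$-consecutive (including the degenerate case of two genuinely consecutive vertices both outside $D$) is handled uniformly. I would organize the write-up as: (1) the observation $x \in D_r(u) \iff d(x,u) \le r$; (2) $x$ separates $\{u,v\}$ iff exactly one of $u,v$ is in $N_r[x]$; (3) for consecutive pairs, only the two boundary-straddling pairs qualify, giving (i); (4) for $D$-consecutive pairs, the separated ones must have an endpoint equal to $x-r$ or to $x+r$, and each of these two boundary vertices determines at most one such pair, giving (ii). Since this lemma is cited from \cite{BCO}, a short proof along these lines suffices, and I would keep it to a few lines.
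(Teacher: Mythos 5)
Part (i) is fine and is exactly the paper's argument: a consecutive pair is $r$-separated by $x$ iff it straddles an end of the arc $N_r[x]$, which happens only for $\{x-r-1,x-r\}$ and $\{x+r,x+r+1\}$.

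Part (ii), however, contains a genuine error. You claim that if $x$ separates a $D$-consecutive pair $\{u,v\}$ then one of $u,v$ must \emph{equal} a boundary vertex $x-r$ or $x+r$. That is false. What is true is that the arc $N_r[x]$ contains exactly one of $u,v$, hence the connecting arc from $u$ to $v$ (whose interior lies in $D$) must cross one end of $N_r[x]$; but the boundary vertex $x\pm r$ at that crossing is then typically an \emph{interior} vertex of that arc, i.e.\ a vertex of $D$, not an endpoint of the pair. Concretely, take $C_{20}$, $r=2$, $x=x_{10}$, with $x_{10},x_{11},x_{12},x_{13},x_{14}\in D$ and $x_9,x_{15}\notin D$: then $\{x_9,x_{15}\}$ is a $D$-consecutive pair separated by $x_{10}$ (since $x_9\in N_2[x_{10}]$ and $x_{15}\notin N_2[x_{10}]$), yet neither endpoint is $x_8=x_{10}-r$ or $x_{12}=x_{10}+r$. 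So your classification ``each of the two boundary vertices determines at most one pair'' does not cover all separated pairs, and the count of two is not established. (There is also a sign slip: you write that the other endpoint lies \emph{inside} $N_r[x]$, whereas separation requires it to lie \emph{outside}.)

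The repair is to classify separated pairs not by which boundary vertex is an endpoint, but by which boundary \emph{edge} of $N_r[x]$ (either $\{x-r-1,x-r\}$ or $\{x+r,x+r+1\}$) is contained in the $D$-arc joining $u$ to $v$. For a fixed boundary edge, a $D$-consecutive pair whose connecting arc contains it is uniquely determined: its endpoints are the nearest vertices of $V\setminus D$ on either side of that edge. Hence at most one separated pair per boundary edge, giving the bound of two. This is precisely what the paper's (terse) proof records by distinguishing the two types $(x\pm l,\,x+l')$ and $(x\pm l,\,x-l')$ with $0<l\le r<l'$: the far endpoint lies beyond $x+r$ in the first type and beyond $x-r$ in the second, and each type occurs at most once.
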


\begin{proof}
(i) For every vertex $x$ in $D$, it $r$-separates at most two pairs
of consecutive vertices $\{x-r-1, x-r\}$ and $\{x+r, x+r+1\}$.\\
(ii) Let $l$ and $l'$ be integers such that $0 < l \leq r$ and $l'
> r$.  $x$ can at most $r$-separate the following two types of
$D$-consecutive vertices: $(x\pm l,  x + l')$ and $(x\pm l,  x-l')$.
$\hfill\Box$
\end{proof}

\begin{lemma}\label{thm13}~(\cite{BCO})~
For $r \geq 2, ~n \geq 1$,~$M_2^{LD}(C_n) \geq
\lceil\frac{n}{3}\rceil$.\\
\end{lemma}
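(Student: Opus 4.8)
The plan is to bound below the number of codewords by a counting argument using Lemma~\ref{lem 12}. The key observation is that for $r=2$ every vertex of $C_n$ lies in the closed neighborhood structure in a way that ties pairs of consecutive vertices to separations by codewords. Specifically, consider the $n$ pairs of consecutive vertices $\{x_i,x_{i+1}\}$ (indices mod $n$). For $D$ to be a $2$-LD set, each vertex $x_i\notin D$ must have $D_2(x_i)\neq\emptyset$ and, for any two distinct vertices outside $D$, their sets must differ; in particular consecutive vertices that are both outside $D$ must be $2$-separated by some codeword. I would first argue that each of the $n$ consecutive pairs must be ``handled'' by $D$ — either at least one of the two vertices is in $D$, or the pair is $2$-separated by a codeword — and then apply part (i) of Lemma~\ref{lem 12}, which says each codeword $x\in D$ can $2$-separate at most two pairs of consecutive vertices.

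The main steps, in order, are as follows. First I would set up the accounting: let $D$ have $d$ codewords. Partition (or rather, classify) the $n$ consecutive pairs $\{x_i,x_{i+1}\}$ into those meeting $D$ and those disjoint from $D$. A pair disjoint from $D$ consists of two non-codeword vertices; since they are distinct vertices of $V\setminus D$, the $2$-LD property forces $D_2(x_i)\neq D_2(x_{i+1})$, so this pair is $2$-separated by some $x\in D$. Second, each codeword $x$ accounts for at most $2$ pairs that actually meet $D$ (the pair $\{x-1,x\}$ and the pair $\{x,x+1\}$ are the only consecutive pairs containing $x$), and by Lemma~\ref{lem 12}(i) it $2$-separates at most $2$ of the disjoint pairs. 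The cleanest way to combine these is to note that every consecutive pair is either incident to a codeword or separated by one, and a single codeword is incident to exactly two consecutive pairs and separates at most two more; hence $n \le 2d + 2d$? — that would only give $d\ge n/4$, which is too weak, so the argument must be sharpened.

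The sharpening, which I expect to be the main obstacle, is to avoid double-counting generously and to use that the pairs ``incident to a codeword'' are already covered without needing separation. The right statement is: a consecutive pair not meeting $D$ must be $2$-separated by a codeword, and by Lemma~\ref{lem 12}(i) each codeword separates at most two such pairs, so the number of pairs disjoint from $D$ is at most $2d$. On the other hand, the number of pairs meeting $D$ is at most $2d$ as well (each codeword is in at most two consecutive pairs), and these two counts together cover all $n$ pairs: $n \le 2d + 2d = 4d$. To get $n\le 3d$ one must observe that a codeword contributing two ``incident'' pairs is an isolated codeword (both neighbors outside $D$), and in that configuration at least one of the pairs it separates coincides with... — here one carefully checks that a codeword $x$ with $x-1,x+1\notin D$ separates the pairs $\{x-3,x-2\}$ and $\{x+2,x+3\}$, while runs of codewords reduce the incident count; a block-by-block analysis of maximal runs of codewords versus gaps shows the worst case contributes $3$ pairs per codeword on average. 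I would organize the final counting around maximal gaps (maximal runs of non-codewords between two codewords): a gap of length $g$ contributes $g-1$ disjoint pairs plus $2$ boundary pairs meeting $D$, so $\sum(g+1)=n+d\cdot(\text{something})$, and bounding how many disjoint pairs in each gap can be separated (at most two per adjacent codeword) yields the $1/3$ ratio.

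Alternatively — and this may be the slicker route — I would prove $M_2^{LD}(C_n)\ge\lceil n/3\rceil$ directly by showing no three consecutive vertices can all lie outside $D$: if $x_{i-1},x_i,x_{i+1}\notin D$ then $D_2(x_{i-1})$ and $D_2(x_{i+1})$ must differ, forcing a codeword at distance $\le 2$ from one but not the other, and combined with $D_2(x_i)\neq D_2(x_{i\pm1})$ one derives enough codewords nearby that the density in any window of $3$ vertices is at least $1$. Concretely, if some window of $3$ consecutive vertices contained no codeword, a short case check on where the separating codewords for the three internal pairs sit gives a contradiction or else forces a codeword inside the window after all; iterating around the cycle, every $3$ consecutive vertices contain a codeword, hence $d\ge\lceil n/3\rceil$. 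I expect the delicate point to be handling the ``boundary'' interactions between consecutive windows so the local density bound genuinely globalizes to $\lceil n/3\rceil$ rather than merely $n/3$, which is where the ceiling and small-$n$ cases need a direct check.
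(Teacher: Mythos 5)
Your proposal does not close. The first route stalls exactly where you say it does: classifying the $n$ consecutive pairs into those meeting $D$ and those disjoint from $D$ gives, with maximal non-codeword runs $g_1,\dots,g_m$, exactly $\sum_j(g_j-1)=n-|D|-m$ disjoint pairs, and Lemma~\ref{lem 12}(i) then yields only $n\le 3|D|+m\le 4|D|$. The ``sharpening'' you describe (isolated codewords, averaging $3$ pairs per codeword over blocks) is left as a sketch and is not actually carried out; as written it is not a proof. The second, ``slicker'' route is based on a false claim: three consecutive vertices \emph{can} all lie outside a $2$-LD set. Indeed the optimal codes of Theorem~\ref{thm15} that attain $\lceil n/3\rceil$ have gaps of length three; e.g.\ for $C_{12}$ the set $D=\{x_4,x_6,x_{10},x_{12}\}$ is a $2$-LD set and $x_7,x_8,x_9\notin D$. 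So no window argument of that form can work.

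The idea you are missing is the one the paper uses: count pairs of \emph{$D$-consecutive} vertices rather than pairs of consecutive vertices. The non-codewords, read in cyclic order, form exactly $n-|D|$ $D$-consecutive pairs; each such pair consists of two distinct vertices of $V\setminus D$, so the $2$-LD property forces each to be $2$-separated by some codeword; and Lemma~\ref{lem 12}(ii) (not part (i)) says each codeword $2$-separates at most two $D$-consecutive pairs. This gives $n-|D|\le 2|D|$ in one line, hence $|D|\ge n/3$ and, by integrality, $|D|\ge\lceil n/3\rceil$. Your accounting only ever invokes part (i) of Lemma~\ref{lem 12}, which is why the $m$ pairs straddling codeword runs leak out of the count and leave you at $n\le 3|D|+m$.
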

\begin{proof}
 Let $D$ be an $r$-LD of $C_n$. By Lemma \ref{lem 12}, and since there are $n-|D|$ pairs of $D$-consecutive
 vertices, we have $2|D|\ge n-|D|$.    $\hfill\Box$
\end{proof}
%\begin{theorem}\label{thm14}~\cite{BCO}~
%Let $r = 2$ and $k \geq 2$, or $r > 2$ and $k \geq 1$; if $r$ is odd
%and $n = k(3r+3)$, or if $r$ is even, and $n =
%3kr$, then\\
%$M_r^{LD}(C_n) \leq \frac{n}{3}$.
%\end{theorem}

In here, we focus on  $r=2$.  Our main results  is the following
theorem.
\begin{theorem}\label{thm15} Let $C_n$ be a cycle with vertex set $\{x_1,\cdots x_n\}$.\\
(1) $M_2^{LD}(C_n)=n$ if $n\le 5$;\\
(2) $M_2^{LD}(C_n)=\lceil \frac{n}{3}\rceil+1$ if $n=6$ or
$n=6k+3$ ($k\ge 1$);\\
(3) $M_2^{LD}(C_n)=\lceil\frac{n}{3}\rceil$ if otherwise.

\end{theorem}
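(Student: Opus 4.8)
The plan is to treat the three regimes separately, using Lemma~\ref{thm13} as the universal lower bound $M_2^{LD}(C_n)\ge\lceil n/3\rceil$ and refining it where needed. For part~(1), when $n\le 5$ every vertex lies within distance $2$ of every other vertex, so $N_2[x]=V$ for all $x$; hence if any vertex $y$ were outside $D$ we could not distinguish it from any other vertex outside $D$, and in fact $D_2(y)=D$ for all such $y$. Thus $|V\setminus D|\le 1$, and a quick check rules out $|V\setminus D|=1$ for $n\le 5$ (the single non-code vertex has $D_2=D$ but there is nothing to compare it to, so actually $|V\setminus D|=1$ \emph{is} allowed unless $n$ is too small — I will verify the exact threshold and confirm the stated answer $M_2^{LD}(C_n)=n$ holds precisely for $n\le5$ by checking that for $n=3,4,5$ no proper subset works, e.g.\ because two non-adjacent non-code vertices always collide). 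This is the easy case.

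For part~(3), the generic case, I would first exhibit a construction meeting $\lceil n/3\rceil$. The natural candidate is a near-periodic pattern with period $3$: place code vertices at positions $\equiv 1,2 \pmod 3$ in blocks, or equivalently leave out every third vertex, then patch the pattern to fit $n \bmod 3$. Concretely, for $n=6k$ one can use the pattern $(1,1,0,1,1,0,\dots)$ giving $2k=\lceil n/3\rceil$ codewords minus adjustments — wait, that gives $4k$, so instead the efficient pattern leaves \emph{two} consecutive gaps per period six, something like $D$ consisting of one vertex followed by two non-code vertices repeating, i.e.\ positions $\equiv 1\pmod 3$, of size $\lceil n/3\rceil$; one then checks directly that consecutive non-code vertices $x_{i+1},x_{i+2}$ (between codewords $x_i,x_{i+3}$) are separated because $D_2(x_{i+1})\ni x_i$ but $x_i\notin D_2(x_{i+2})$ when the next codeword is far enough, and non-consecutive non-code vertices are separated by some codeword at distance $\le 2$ from one but not the other. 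The gaps $n\equiv 1,2,4,5\pmod 6$ each need a slightly different local patch, and I would present one explicit $D$ for each residue and verify the two defining properties (domination: every non-code vertex has a codeword within distance $2$; separation: distinct non-code vertices get distinct $D_2$). Matching the lower bound $\lceil n/3\rceil$ then finishes these residues.

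For part~(2), the cases $n=6$ and $n=6k+3$, the content is the \emph{improved} lower bound $M_2^{LD}(C_n)\ge\lceil n/3\rceil+1$, i.e.\ a parity/counting obstruction showing the bound $2|D|\ge n-|D|$ of Lemma~\ref{thm13} cannot be met with equality here. Equality in Lemma~\ref{thm13} forces every codeword to $2$-separate \emph{exactly} two pairs of $D$-consecutive vertices, and by Lemma~\ref{lem 12}(ii) these must be of the two prescribed types $(x\pm l, x\pm l')$ with $0<l\le 2<l'$; this rigidity pins down, up to rotation, the local structure of $D$ — essentially forcing the periodic pattern of part~(3) — and then one shows this forced pattern is globally inconsistent when $n=6k+3$ (the period does not close up, or two non-code vertices collide after going around). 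For $n=6$ it is a direct finite check. So $M_2^{LD}\ge\lceil n/3\rceil+1$, and a construction with exactly $\lceil n/3\rceil+1$ codewords (take the part~(3)-style pattern for $6k$ on $6k$ of the vertices and spend one extra codeword to absorb the leftover three) gives the matching upper bound.

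The main obstacle will be the lower bound in part~(2): turning "equality in $2|D|\ge n-|D|$" into a rigid global structure. The argument must carefully account for \emph{all} the ways a codeword can separate its two required $D$-consecutive pairs and its (at most two) consecutive pairs simultaneously, show that these local constraints propagate to a unique periodic pattern, and then derive the contradiction from $3\mid n$ but $6\nmid n$. Getting the case analysis of the local patterns exhaustive — especially handling short $D$-gaps of length $1$ versus length $2$ and the interaction at the boundary between a codeword block and a gap — is where the real work lies; everything else is construction-and-verification bookkeeping.
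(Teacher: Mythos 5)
There is a genuine gap, and it is concentrated in the construction you propose for the generic case (3). The period-$3$ pattern $D=\{x_i : i\equiv 1 \pmod 3\}$ does \emph{not} work: for the two non-codewords $x_{i+1},x_{i+2}$ lying between consecutive codewords $x_i$ and $x_{i+3}$ we have $d(x_{i+2},x_i)=2$ and $d(x_{i+1},x_{i+3})=2$, so $D_2(x_{i+1})=D_2(x_{i+2})=\{x_i,x_{i+3}\}$ and the pair is never separated. Your verification step ``$x_i\notin D_2(x_{i+2})$'' is simply false for $r=2$. This is not a patchable boundary issue: a working period-$3$ pattern of size $\lceil n/3\rceil$ would exist for $n=6k+3$ as well and would contradict part (2). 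The correct optimal pattern has period $6$ with \emph{two} codewords per period and gaps alternating between $2$ and $4$ (the paper uses, e.g., $D=\{x_{6p+4}\}\cup\{x_{6q}\}$ for $n=6k$, with small modifications for the other residues, including a genuinely different set for $n=11$). The alternation $2,4,2,4,\dots$ is also exactly what explains the exception at $n\equiv 3\pmod 6$: an odd number $2k+1$ of gaps, each forced into $\{2,4\}$, cannot close up around a cycle of odd length $6k+3$.

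For part (2) your plan points in the right direction but leaves the entire content unexecuted. The paper's argument makes the ``rigidity'' precise through two concrete claims: if $|D|=2k+1$ then every codeword must $2$-separate exactly two pairs of $D$-consecutive vertices, whence (Claim 1) $D$ contains at most two consecutive vertices and (Claim 2) every gap between consecutive codewords is exactly $2$ or $4$; since the $2k+1$ gaps cannot strictly alternate around an odd cycle, two adjacent gaps are equal, and both the $(2,2)$ and $(4,4)$ cases kill the pair $\{x_{i_j-1},x_{i_j+1}\}$ (separated twice, resp.\ never). These quantitative facts — gaps in $\{2,4\}$ and the adjacency/parity obstruction — are the missing ideas; without them the ``propagate to a unique periodic pattern'' step is not a proof. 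Finally, on part (1): your hesitation about $|V\setminus D|=1$ is actually pointing at a real subtlety (a single non-codeword imposes no separation constraint, so under the stated definition $V$ minus one vertex is a $2$-LD set for $n\le 5$); you should resolve this explicitly rather than leave it as ``I will verify the exact threshold.''
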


\begin{proof}
 When $n\le 5$, the distance of any two vertices in $C_n$ is no
more than $2$. Hence,  $M_2^{LD}(C_n)=n$. As a set with size two has
only three nonempty subsets, we know that $M_2^{LD}(C_6)\ge 3$. It
is easy to know that $D=\{x_1,x_3,x_5\}$ is a $2$-LD of $C_6$.
Therefore, $M_2^{LD}(C_6)= 3$.  In the following, we assume that
$n\ge 7$.

 $M_2^{LD}(C_n) \geq \lceil\frac{n}{3}\rceil$ holds by Lemma
\ref{thm13}, next we construct a $2$-LD achieving the
lower bound in the following cases:\\
$\bullet$ $n = 6k$, $D=\{x_i| i=6p+4, p\ge 0\}\cup\{x_i| i = 6q, q \geq 1\}$;\\
$\bullet$ $n = 6k+1$ or $6k+2$, $D=\{x_i| i = 6p+4, p\ge 0\}\cup \{x_i| i = 6q, q \geq 1\} \cup \{x_n\}$;\\
%$\bullet$ $n = 6k+2$, $D=\{x_i| i = 6p+4, p \geq 0\}\cup i = 6q,  q \geq 1\}\cup \{x_n\}$;\\
$\bullet$ $n = 6k+4$, $D=\{x_i| i = 6p+4, p\ge 0\}\cup\{x_i| i = 6q,
q \geq 1\}\cup \{x_{n-2}\}$;\\
$\bullet$ $n = 6k+5$ and $n>11$, $D=\{x_i| i = 6p+2, 0\le p\le k-2
\}\cup\{x_i| i = 6q, 1\le q \le k-1\}\cup \{x_{n-8},x_{n-7},x_{n-2},x_{n-1}\}$;\\
$\bullet$ $n =11$, $D=\{x_1,x_2,x_5,x_9\}$.\\

 Now we turn to the case $n= 6k+3$. By Lemma \ref{thm13}, we have known that  $M_2^{LD}(C_n) \geq 2k+1$.
 We first show that $M_2^{LD}(C_n) \geq 2k+2$.
Suppose to the contrary that  $D$ is a $2$-LD  for $C_n$ with $2k+1$
vertices. Then there are $4k+2$ pairs of $D$-consecutive vertices,
and hence every vertex in $D$ $2$-separates exactly two pairs of
$D$-consecutive vertices, and they are disjoint.  We have the following claims.\\
\\
{\bf Claim 1:} $D$ contains at most two consecutive vertices in $C_n$.\\
{\bf Proof of Claim 1:} Since each vertex in $D$ $2$-separates two
pairs of $D$-consecutive vertices, it follows that $D$ contains at
most four consecutive vertices in $C_n$. Suppose that $D$ contains
four consecutive vertices in $C_n$, without loss of generality, we
assume that $\{x_1,x_2,x_3,x_4\}\subseteq D$. Then both $x_1$ and
$x_4$ $2$-separate a pair of $D$-consecutive vertices $\{x_n,x_5\}$,
a contradiction. If $D$ contains three consecutive vertices in
$C_n$, without loss of generality, we assume that
$\{x_1,x_2,x_3\}\subseteq D$, then both $x_1$ and $x_3$ $2$-separate
a pair of $D$-consecutive vertices $\{x_n,x_4\}$, a contradiction.
$\Box$
\vskip 0.2cm
Assume that $D=\{x_{i_1}, x_{i_2},\cdots,
x_{i_{2k+1}}\}$ with
$i_1<i_2<\cdots<i_{2k+1}$.\\
\\
{\bf Claim 2:}  $|i_j-i_{j+1}|= 2$ or $4$ for all $j\in \{1,\cdots, 2k+1\}$.\\
 {\bf Proof of  Claim 2:}
Since $D_2(x)\neq \emptyset$ for any $x\not\in D$, it is easy to
know that $|i_j-i_{j+1}| \leq 5$. If $|i_j-i_{j+1}| = 5$ for some
$j\in\{1,\cdots,2k+1\}$, then $x_{i_j}, x_{i_{j+1}}$ both
$2$-separate the pair of consecutive vertices $(x_{i_j+2},
x_{i_j+3})$, a contradiction.

Suppose that $|i_j-i_{j+1}| = 1$ for some $j\in\{1,\cdots,2k+1\}$,
without loss of generality, we assume that $x_1\in D$ and $x_2\in
D$. By Claim 1, we know that $x_3\not\in D$ and $x_n\not\in D$. If
$x_4\in D$, then either $\{x_3,x_5\}$ or $\{x_3,x_6\}$ is a pair of
$D$-consecutive vertices. So, $x_1$ and $x_2$ $2$-separate the same
pair of $D$-consecutive vertices, a contradiction. Thus ~$x_4\not\in
D$. Similarly, $x_{n-1}\not\in D$. If  $x_{n-2}$ and $x_5$ are both
in $ D$, then
 they both $2$-separate the pair of $D$-consecutive vertices $\{x_n, x_3\}$, a contradiction.
  Without loss of generality, we take $x_5\not\in D$. $x_6\in D$ implies that
  the pair of $D$-consecutive vertices
  $\{x_3,x_4\}$ are $2$-separated by both  $x_1$ and $x_6$. It is a contradiction.
  $x_7\in D$ implies that the pair of $D$-consecutive vertices
  $\{x_4,x_5\}$ are $2$-separated by both $x_2$ and $x_7$. It is a contradiction.
  Hence, $x_6$ and $ x_7$ are both not in $D$. Thus, $D_2(x_5)= \emptyset$, a contradiction.
  Therefore, $|i_j-i_{j+1}|\neq 1$.

 Suppose that  $|i_j-i_{j+1}| = 3$ for some $j\in\{1,\cdots,2k+1\}$, without loss of generality, we
assume that $x_1\in D$ and $x_4\in D$. Then $x_n\in D$ or $x_5\in
D$,  which follows from the pair of $D$-consecutive vertices $\{x_2,
x_3\}$ require to be $2$-separated, however,  it contradicts with
$|i_j-i_{j+1}| \geq 2$.   \ \ $\Box$ \vskip 0.2cm

%And there are no two pairs of vertices in $D$ such that
%$|i_j-i_{j+1}| =|i_j-i_{j-1}|$.

%However,

Since $C_n$ contains $6k+3$ vertices and there are $2k+1$ vertices
in $D$, thus by Claim 2,  there must exist some $j\in
\{1,\cdots,2k+1\}$  such that $|i_j-i_{j+1}| =|i_j-i_{j-1}|$.
However, if $|i_j-i_{j+1}| =|i_j-i_{j-1}|=2$, then $x_{i_{j-1}},
x_{i_{j+1}}$ both $2$-separate~$\{x_{i_j-1}, x_{i_j+1}\}$;
if~$|i_j-i_{j+1}| =|i_j-i_{j-1}|=4$, then there is no vertex in $D$
$2$-separate~$\{x_{i_j-1}, x_{i_j+1}\}$. Therefore, $M_2^{LD}(C_n)
\geq 2k+2$.

Now, we construct a $2$-LD for $C_n$ with $2k+2$ vertices as
follows: $D=\{x_i |i=6p+1$ or $ 6p+3, 0\le p\le k-1\}\cup\{x_{n-1},
x_{n-2}\}$. $\hfill\Box$

\end{proof}

\section{Conclusion}
The main purpose of this paper is to give the exact value of
$M_r^I(G)$ for paths and odd cycles for arbitrary positive integer
$r$, and of $M_2^{LD}(C_n)$. It would be of interest to extend the
latter to $r$-LDs for $r > 2$.
 \small {

}


\begin{thebibliography}{99}
%\bibitem{WHJ}
%T. Berger-Wolf,  W. E. Hart,  J. Saia,  Discrete sensor placement
%problems in distribution networks,  Mathematical and Computer
%Modelling 42 (2005) 1385-1396.

\bibitem{BCO}
N. Bertrand,  I. Charon,  O. Hudry,  A. Lobstein,  Identifying and
locating-dominating codes on chains and cycles,  European Journal of
Combinatorics 25 (2004) 969-987.



\bibitem{BH}
N. Bertrand,  I. Charon,  O. Hudry,  A. Lobstein,  1-identifying
codes on trees,  Australian Journal of Combinatorics  31 (2005)
21-35.

\bibitem{BHL}
U. Blass,  I. Honkala,  S. Litsyn,  Bounds on identifying codes,
Discrete Mathematics 241 (2001) 119-128.

\bibitem{C}
D. I. Carson,  On generalized locationg-domination,  in: Y.Alavi, A.
Schwen (Eds.),  Graph Theory,  Combinatorics and Applications,
Wiley,  New Yoek,  1995,  161-179.

\bibitem{CHH}
I. Charon,  I. Honkala,  O. Hudry,  A. Lobstein,  The minimum
density of an identifying code in the king lattice,  Discrete
Mathematics 276 (1-3) (2004) 95-109.

\bibitem{CH}
I. Charon,  O. Hudry,  A. Lobstein,  Identifying and
locating-dominating codes:NP-complete results for directed graphs,
IEEE Transactions on Information Theory IT-48 (2002) 2192-2200.

\bibitem{CHL}
I. Charon,  O. Hudry,  A. Lobstein, Identifying codes with small
radius in some infinite regular graphs, Electronic Journal of
Combinatorics 9(1)(2002)R11.



\bibitem{CG}
G. Cohen,  S. Gravier,  I. Honkala,  A. Lobstein,   M. Mollard, C.
Payan,  G. Zemor,  Improved identifying codes for the grid,
Electronic Journal of Combinatorics 6(1999) R19.

\bibitem{CHLZ}
G. Cohen,  I. Honkala,  A. Lobstein,  G. Zemor,  Bounds for codes
identifying vertices in the hexagonal grid,  SIAM Journal on
Discrete Mathematics 13 (2000) 492-504.




\bibitem{COO}
G. Cohen,  I. Honkala,  A. Lobstein,  G. Zemor,  On codes of
identifying vertices in the teo-dimensional square lattice with
diagonals,  IEEE Transactions on Computers 50 (2001) 174-176.

\bibitem{COOO}
G. Cohen,  I. Honkala,  A. Lobstein,  G. Zemor,  On identifying
codes,  in: A. Barg,  S. Litsyn(Eds.),  Codes and Association
Schemes,  in:DIMACS Series,  vol.56,  American Mathematical Society,
Providence,  RI,  2001,  97-109.

\bibitem{CS}
C. J. Colbourn,  P. J. Slater,  L. K. Stewart, Locating-dominating
sets in series-parallel networks,  in : Proceedings of the 16th
Annual Conference on Numerical Mathematics and Computing,  Manitoba,
Winnipeg,  1986,  135-162.




\bibitem{GMS}
S.  Gravier,  J. Moncel,  A. Semri,   Identifying codes of cycles,
European Journal of Combinatorics 27 (2006) 767-776.

\bibitem{LR}
I. Honkala,  T. Laihonen, S. Ranto,  On locating-dominating codes in
binary hamming sapces,  Discrete Mathematics and Theoretical
Computer Science 6(2004) 265-282.

\bibitem{HL}
I. Honkala,  A. Lobstein,  On the density of identifying codes in
the square lattice,  Journal of Combinatorial Theory Series B 85
(2002) 297-306.

\bibitem{KCL}
M. G. Karpovsky,  K. Chakrabarty,  L. B. Levitin,  On a new class of
codes for identifying vertices in graphs,  IEEE Transactions on
Information Theory 44 (1998) 599-611.

\bibitem{LOB}
A. Lobstein,  Codes identifiants et localisateurs dans les graphes:
Une bibliographie http://www.infres.enst.fr/lobstein/bibLOCDOMetID.
html,  November 2005.


\bibitem{RR}
D. L. Roberts,  F. S. Roberts,  Locating sensors in paths and
cycles: The case of 2-identifying codes,  European Journal of
Combinatorics 29 (2008) 72-82.

\bibitem{S}
P. J. Slater,  Domination and location in acyclic graphs, Networks
17(1987) 55-64.

\bibitem{PJS}
P. J. Slater,  Dominating and reference sets in a graph,  Journal of
Mathematical and Physical Sciences 22(1998) 445-455.

%\bibitem{PJ}
%P. J. Slater,  Fault-tolerant locating-dominating sets,  Discrete
%Mathematics 249(2002) 179-189.



\bibitem{XKH}
M. Xu, K. Thulasiraman, X-D. Hu, Identifying codes of cycles with
odd orders, European Journal of Combinatorics 29 (2008) 1717-1720.


%\bibitem{West}
%D. B. West,  Introduction to Graph Theory . 2nd ed. ,  Prentice
%Hall,  Inc,  NJ 2001.





\end{thebibliography}
\end{document}